\documentclass[envcountsame,orivec,runningheads]{llncs}
\usepackage{a4,graphics,amsmath,amssymb,amsfonts,xspace,stmaryrd,textgreek,xcolor}
\usepackage{hyperref,breakurl}             
\usepackage[normalem]{ulem}
\usepackage[capitalise]{cleveref}
\usepackage{tikz,pgfplots,csvsimple}
\usepackage{stix}
\usepackage{accents,ushort}
\usepackage{makecell}
\usepackage{algpseudocode}
\algblockdefx{Prog}{EndProg}[1]{\textbf{input} #1}[1]{\textbf{return} #1}
\newcommand{\myprime}{\textquotesingle\xspace}
\newcommand{\myprimes}{\textquotesingle\textquotesingle\xspace}
\newcommand{\IR}{\mathbb{R}}

\newcommand{\IF}{\mathbb{F}}
\newcommand{\IC}{\mathbb{C}}
\newcommand{\IZ}{\mathbb{Z}}
\newcommand{\IN}{\mathbb{N}}

\newcommand{\IK}{\mathbb{K}}

\newcommand{\IT}{\mathbb{T}}
\newcommand\omicron{\ensuremath{o}}
\newcommand{\error}{\epsilon}

\newcommand{\uobarZ}{\underaccent{\bar}{\bar\IZ}}

\newcommand{\poly}{\operatorname{poly}}
\newcommand{\polylog}{\operatorname{polylog}}

\newcommand{\Card}{\operatorname{Card}}
\newcommand{\bin}{\mathrm{bin}}

\newcommand{\Round}{\operatorname{\sf Round}}
\newcommand{\sign}{\operatorname{sign}}
\newcommand{\dom}{\operatorname{dom}}
\newcommand{\cost}{\operatorname{cost}}
\newcommand{\mychoose}{\texttt{choose}}
\newcommand{\id}{\operatorname{id}}
\newcommand{\concat}{{}^\frown}

\newcommand{\sdzero}{\textup{\texttt{0}}\xspace}
\newcommand{\sdone}{\textup{\texttt{1}}\xspace}
\newcommand{\TWO}{\{\sdzero,\sdone\}}
\newcommand{\ONE}{\{\sdone\}}
\newcommand{\calA}{\mathcal{A}}
\newcommand{\calB}{\mathcal{B}}

\newcommand{\calF}{\mathcal{F}}

\newcommand{\calO}{\mathcal{O}}

\newcommand{\calL}{\mathcal{L}}

\newcommand{\calP}{\mathcal{P}}
\newcommand{\calS}{\mathcal{S}}
\newcommand{\calT}{\mathcal{T}}
\newcommand{\frakF}{\mathfrak{F}}
\newcommand{\Machine}{\mathcal{M}}

\newcommand{\mapstoto}{\Mapsto}

\newcommand{\ERC}{\textsf{\rm ERC}\xspace}
\newcommand{\true}{\texttt{t}\xspace}
\newcommand{\false}{\texttt{f}\xspace}
\newcommand{\unknown}{\texttt{u}\xspace}

\newcommand{\eg}{e.\,g.\xspace}
\newcommand{\ie}{i.\,e.\xspace}

\newcommand{\myn}{\mathbf{n}}
\newcommand{\naive}{na\"{\i}ve\xspace}
\newcommand{\naively}{na\"{\i}vely\xspace}
\newcommand{\Naive}{Na\"{\i}ve\xspace}

\newcommand{\mydoi}[1]{\href{http://doi.org/#1}{\texttt{doi:#1}}}
\makeatletter
\DeclareRobustCommand*{\bfseries}{%
  \not@math@alphabet\bfseries\mathbf
  \fontseries\bfdefault\selectfont
  \boldmath
}
\makeatother

\newcommand{\COMMENTED}[1]{}

\title{Algorithmic Cost in \emph{Exact Real Computation}}
\author{Jihoon Hyun\inst{1} \and Holger Thies\inst{2} \and Martin Ziegler\inst{1}}  
\authorrunning{Jihoon Hyun \and Holger Thies \and Martin Ziegler}
\institute{KAIST, School of Computing, Daejeon, Republic of Korea 
\and Kyoto University, Graduate School of Human and Environmental Studies, Japan}
\date{\keywords{Strong Church-Turing Thesis, Numerical Programming Language, Algorithm Analysis}}

\begin{document}
\setcounter{page}{0}
\maketitle

\begin{abstract}
Turing completeness of a programming language or system characterizes its expressive power;
and the strong Church-Turing hypo-/thesis refines such from qualitative to polynomial-time equivalence.
\emph{Exact Real Computation} (\ERC) is a novel numerical programming language paradigm: 
for the imperative processing of continuous data as entities 
appearing as exact, \ie devoid of rounding errors \linebreak[4] [\mydoi{10.1007/978-3-662-44199-2\_107}].
\ERC has been designed \linebreak[4] [\mydoi{10.46298/lmcs-20(2:17)2024}] 
as convenient and practical alternative, namely
proven qualitatively equivalent, to the 
Turing machines originally underlying Computable Analysis
[\mydoi{10.1007/978-3-642-56999-9},
\mydoi{10.1007/978-1-4684-6802-1}].

The present work quantitatively strengthens this qualitative Turing-completeness:
We assign bit-costs to \ERC's operational primitives 
(including partial/multivalued tests) in such a way that
any real function incurring polynomial cost becomes
Turing-computable in polynomial time, and vice versa.
Runtime measurements on implementations in the
\textsf{iRRAM} \texttt{C++} library 
confirm our theoretical performance predictions.
\end{abstract}

\setcounter{tocdepth}{2}
\setcounter{secnumdepth}{2}
\renewcommand{\contentsname}{}
\section*{Table of Contents}
\begin{center}\small
\begin{minipage}[c]{0.98\textwidth}\vspace*{-8ex}%
\tableofcontents
\end{minipage}
\end{center}

\newpage
\section{Introduction}
\label{s:Introduction}
90 years after its introduction, the Turing Machine is still the yardstick
for both computability and computational complexity.
Within Chomsky's Hierarchy for example, 
Type-0 grammars are characterized as Turing-complete; 
and the Myhill-Landweber-Kuroda Theorem characterizes Type-1 as \textsf{NSPACE}$(n)$.
Actually programming a Turing Machine is cumbersome, though.
Practically convenient high-level programming languages hide awkward hardware details:
\Naive arithmetic on, say, \texttt{bytes} can cause wrap-around and code malfunction \cite{NuclearGandhi};
whereas \textsf{Python}'s \texttt{int} and \textsf{Java}'s \texttt{BigInteger} 
and \textsf{GMP}'s \texttt{mpz\_t}
relieve the programmer from worrying about the magnitude of integers---by
dynamically tracing and tacitly adapting to the data's size during execution, 
such as to maintain agreement with mathematical $\IZ$.
For performance analysis and prediction, bit-lengths $n$ and their evolution
throughout computations do remain crucial, though.

Indeed, 
the \emph{Integer Register Machine} model captures the ultimate capabilities of Turing machines \cite{Davis1965}, quantitatively.
Repeated multiplications can blow up results;
quantitatively refined runtime estimates therefore
assign bit (as opposed to unit) costs\footnote{%
Here and in the sequel we may concisely omit constant factors $\calO()$ from cost analyses.} 
to each primitive---depending 
on the binary length $n=1+\lfloor\log_2|N|\rfloor$ of the integer/s $N$ processed \cite{CookReckhow73}; 
and bound how each operation affects $n$:
see Figure~\ref{f:IntCost}. 
This model \emph{with} bit-costs 
underlies most discrete algorithm analyses, 
\eg in Computer \emph{Algebra} \cite{MCA}.

\begin{figure}[htb]\centerline{%
\begin{tabular}{c@{\quad}|@{\quad}c@{\quad}c@{\quad}c@{\quad}c@{\quad}c@{\quad}c@{\quad}c@{\quad}c}
\textbf{operation} & \texttt{>}  &  \texttt{+}/\texttt{-}  &  \texttt{\texttimes} & \texttt{\textdiv} & 
\texttt{isqrt} & \texttt{isprime} \\ 
\hline 
\textbf{cost} 	&  $n$ &   $n$     &   $n\log n$   & $n\log n$ & 
$n$ & poly$(n)$ \\
\textbf{result $n'$} &  1   &   $n+1$   &    $2n$ &   $n$  &  
$\lceil n/2\rceil$ & 1
\end{tabular}}
\caption{\label{f:IntCost}Bit-costs of common integer operations and result worst-case length $n'$}
\end{figure}

\begin{remark}
\label{r:IntCost}
Comparison is a predicate, returning one bit.
Recall the break-through \cite{IntegerMultiplication} 
performing $\times$ in asymptotic bit-cost $M(n)=n\log n$;
and see for instance \cite[Theorem~9.8]{MCA} regarding $\div$, 
and \cite{KaratsubaSqrt} for \texttt{isqrt}=$\lfloor\sqrt{\cdot}\rfloor$,
and \cite{Dietzfelbinger} for predicate \texttt{isprime}. 
\end{remark}
We similarly assign bit-costs to the \emph{real} register machine model \ERC,
whose primitives process real numbers (seemingly) exactly---including
partial and fuzzy sign tests as well as fast Cauchy limits. 
As a decidedly numerical formal programming language,
these local costs additionally depend on the current precision parameter $m$, 
which \ERC hides and handles on behalf of the user programmer.
In addition to the integer Register Machine and to $n$ evolving forward,
from inputs through program execution, said $m$ grows `backwards': 
starting with the desired output accuracy $p$, and ensured large enough throughout 
such that intermediate real signs can be determined reliably.
Unifying common practical approaches for finding such $m$ from $p$ for a given algorithm $\calA$ and input $\bar x$,
we capture its evolution as reverse composition of the moduli of continuity
of the individual primitive operations and predicates executed by a
collection of computation trees which may invoke each other and possibly recursively so.
Our Main Theorems~\ref{t:Simul1}+\ref{t:Simul2} then assert that any real function computable
in polynomial time on a Turing Machine can be expressed in \ERC within polynomial cost, and vice versa.
Implementations and empirical evidence suggest that both might in fact differ at most quadratically.

\begin{definition}[Abstract Cost Measure]
\label{d:Cost}
Fix a programming language $\calL$, \ie with formal syntax and semantics.
A (global) \emph{cost measure} on $\calL$
assigns some non-negative real number $\cost(\calA,\bar x)$, or infinity,
to any---not necessarily terminating---execution of a program $\calA$ over $\calL$ on input $\bar x$ .

A \emph{local} cost measure assigns some non-negative real number $\cost(\omicron,\bar y)$, or infinity,
to any execution of a primitive $\omicron$ of $\calL$ with argument $\bar y$.
Its \emph{induced} (global) cost measure assigns to any execution of program $\calA$ on input $\bar x$ 
the sum $\sum_j\cost(\omicron_j,y_j)$ of the local costs 
of the primitives $\omicron_j$ performed with arguments $\bar y_j$.
\end{definition}

\subsection{Overview}
\label{ss:Overview}

Section~\ref{s:Real} recalls and collects selected aspects of real number computation:
with user-controlled and tracked accuracy (Subsection~\ref{ss:FPCost})
and according bit-cost assignments and example forward analyses (Subsection~\ref{ss:Examples});
modulus of continuity capturing stability and error propagation (Subsection~\ref{ss:Stability});
and a brief recap of the \emph{Exact Real Computation} paradigm \ERC (Subsection~\ref{ss:ERC}),
which delegates accuracy handling to software \cite{MullerZ14} 
and is elaborated in Sections~\ref{s:Contribution} and \ref{s:ERC2}.

Section~\ref{s:Contribution} contains the first half of our contribution:
Definition~\ref{d:Contribution} assigns local bit-costs to the basic operations comprising \ERC.
These costs naturally depend on both the arguments' length and the working precision parameters $n$ and $m$.
The evolution of $n$ is immediate, similarly to the integer register machine;
starting with the input length, its growth through intermediate values 
can be bounded by forward induction on the operations performed.
Determining a sufficient working precision $m$ on the other hand 
is particular to numerical algorithms
and discussed in Subsection~\ref{ss:Stability2}. 
In \ERC, said precision evolves backwards:
from desired output precision $p$ via intermediate requirements 
for reliable real tests, towards the real inputs and constants
as mentioned in Subsection~\ref{ss:ERC}b) and Problem~\ref{p:Stability2}.
In order to formally capture this dependency, unifying
\emph{bottom-up} and \emph{top-down} and mixed methods from practice \cite[\S2.2]{realLib},
Subsection~\ref{ss:Modulus2} extends the quantitative stability notion from Subsection~\ref{ss:Stability}
to multivariate primitive operations;
and Subsection~\ref{ss:Choose} further on to multivalued predicates (such as fuzzy tests) arising in \ERC.
Subsection~\ref{ss:Round} demonstrates these concepts by analyzing two example \ERC programs for multivalued integer rounding:
one incurring bit-cost proportional to the value (=unary length) of the real argument,
and the other more sophisticated one proportional to its logarithm (=binary length).
The latter is the key to Subsection~\ref{ss:Simul1} establishing one direction of our contribution about quantitative equivalence:
any real function computed by a polynomial-time oracle Turing machine in the sense of \cite[\S2.4+\S2.5]{Ko91}
can also be computed by an \ERC program incurring polynomial cost.

For the converse, culminating in Subsection~\ref{ss:Simul2},
Section~\ref{s:ERC2} assigns moduli of continuity by backward induction/composition to \ERC programs:
the latter first formalized as Straight-Line Programs without branching (Subsection~\ref{ss:SLP}),
then generalized to Computation Trees with multivalued tests (Subsection~\ref{ss:CT}),
and finally (Subsection~\ref{ss:CT}) as Computation Forests \ie
collections of Computation Trees which may invoke (``call'') each other, possibly recursively.

Implementations and empirical evaluations in the \textsf{iRRAM} \texttt{C++} library 
in Appendix~\ref{a:Experiments} 
suggest that the thus assigned and predicted abstract costs actually
agree quadratically (rather than merely polynomially) with practical runtimes.
This includes an exponentially `steep' function (Remark~\ref{r:Modulus}d),
whose realization and cost in \ERC calling the unbounded real exponential and logarithm function
are elaborated in Appendix~\ref{a:Steep}.

\medskip
Our previous conference version \cite{MCU2026} essentially comprises
Sections~\ref{s:Real} and \ref{s:Contribution} of the present work
including Theorem~\ref{t:Simul1}, but not Section~\ref{s:ERC2}
with the converse Theorem~\ref{t:Simul2} nor the Appendices.

\subsection{Related Work}
\label{ss:Related}
Programming Language Theory is usually concerned with the syntax and semantics of some high-level programming language $\calL$.
Such $\calL$ may have sub-recursive \cite{Automata} or super-recursive power \cite{Hypercomputation}.
Turing-completeness (aka computational universality) captures capabilities equal to that of a Turing machine,
asserting both realizability and expressiveness---but disregarding efficiency \cite[\S14]{Minsky67}.
The integer register machine is equivalent to the Turing machine model on this qualitative computability level
but, depending on the instruction set, not necessarily with respect to polynomial-time complexity---not
to mention even finer the degree of polynomial time \cite[\S2.2.2]{Boas90}.
This can be mended by proceeding from unit (=command counting) cost to suitably designed local bit-costs,
as in Figure~\ref{f:IntCost}. 

Programming Language Theory is usually concerned with the syntax and semantics of some high-level programming language $\calL$.
Such $\calL$ may have sub-recursive \cite{Automata} or super-recursive power \cite{Hypercomputation}.
Turing-completeness (aka computational universality) captures capabilities equal to that of a Turing machine,
asserting both realizability and expressiveness---but disregarding efficiency \cite[\S14]{Minsky67}.
The integer register machine is equivalent to the Turing machine model on this qualitative computability level
but, depending on the instruction set, not necessarily with respect to polynomial-time complexity.
This can be mended by proceeding from unit (=command counting) cost to suitably designed local bit-costs,
as in Figure~\ref{f:IntCost}. 

\begin{remark}
\label{r:ChurchTuring}
Various versions of the Church-Turing Hypo-/Thesis \cite{Deutsch85,Yao03,Ziegler09,BeggsTucker14} 
connect among the following conditions, 
primarily for functions $f$ on natural numbers:
\begin{itemize}
\item
$f$ is computable ~(i) by a Turing machine, ~(ii) by a register machine, ~(iii) on a \emph{physical} computing device.
\item
$f$ is computable in \emph{polynomial} time ~(i\myprime) by a Turing machine, ~(ii\myprime) by a register machine w.r.t. bit-cost, ~(iii\myprime) on a physical computer.
\item
Given $d\in\IN$, 
$f$ is computable in time $\calO(n^d\cdot\polylog n)$ by ~(i\myprimes) a multitape Turing machine or ~(ii\myprimes) by a register machine w.r.t. bit-cost.
\end{itemize}
Additionally, for a programming language $\calL$ equipped with a
cost measure in the sense of Definition~\ref{d:Cost}, 
consider the following properties:
\begin{itemize}
\item
$f$ can be ~(iv) expressed in $\calL$, ~(iv\myprime) within polynomial cost, ~(iv\myprimes) of same polynomial degree up to polylogarithmic factors.
\qed\end{itemize}
\end{remark}
\noindent
Abstract cost measures were introduced in \cite{Blum67}.
We focus here on the imperative paradigm:
(Pure) functional/declarative programming, while elegant and nowadays also usually fast in practice, 
reduces or removes user control over the flow of execution---which 
may undermine rigorous asymptotic performance guarantees (iv\myprime/iv\myprimes).
Modern languages like \textsf{Python} or \textsf{Java} offer some blend of both,
imperative and functional; compare Subsection~\ref{ss:ERC}c) below.
See \cite{LambdaCost,LambdaMachine} about assigning local costs to 
Lambda Calculus in order to strengthen its Turing-completeness (iv) to polynomial equivalence (iv\myprime).
The latter quest is different from designing a programming system in the first place:
in order to have expressive power equivalent to polynomial-time Turing computation \cite{Lago22}
or some other form of built-in resource bound as in Linear Logic.

\begin{example}
\label{x:C}
\begin{enumerate}
\item[a)]
For the `core' (\ie including pointers, but no string functions)
\textsf{C} programming language with unit costs,
(iv\myprimes) is strongly polynomially equivalent to (i\myprimes) and (ii\myprimes):
because its basic data types (like \texttt{char}, \texttt{short}, \texttt{int}, \texttt{*}) 
have constant length and operational costs---enabling 
reliable performance predictions, such as for embedded systems.
\item[b)]
The polynomial equivalence from (a) does extend to string functions, when charging them
cost proportional to the length $n$ of the string being operated on.
Note that \textsf{string.h} relieves the programmer from keeping track of the lengths $n$ of the strings being processed;
while aforementioned lengths do remain crucial for performance analysis and prediction (iv\myprime/iv\myprimes).
\item[c)]
Like \textsf{string.h}, the \textsf{GMP} library takes care of the lengths $n$ of integers,
while they do remain crucial in performance analyses (iv\myprime/iv\myprimes).
For \textsf{C} with \textsf{GMP}, 
(iv\myprimes) is still equivalent to (i\myprimes) and (ii\myprimes)
when assigning bit-costs according to Figure~\ref{f:IntCost}.
\item[d)]
Any programming system $\calL$ with recursively enumerable semantics 
can trivially be assigned equivalent (iv\myprimes) abstract global cost:
by defining $\cost(\calA,\bar x)$ as the number of steps the Turing machine simulating $\calA$ makes on input $\bar x$.
These global costs may even be induced from `artificially' local ones, 
namely depending on the entire current configuration $\vec y$ of the simulating Turing machine.
\qed\end{enumerate}\end{example}
`Reasonable' local cost assignments should depend only on the current argument $\bar y$ to the current programming primitive $\omicron$.
In fact it is customary (as in Figure~\ref{f:IntCost})
to express local costs in dependence only on the length $n$ of the arguments $\bar y$ to $\omicron$,
rather than on $\bar y$ itself. 
In this case, qualitative polynomial-time equivalence of (ii\myprime) or (iv\myprime) to (i\myprime)
means to bound the largest variable contents and the number of operations independently.
To \ERC we shall assign local costs depending on the length $n$ and on the working precision $m$.

\medskip\noindent
Regarding \emph{real} (as opposed to integer) functions $f$, 
the theoretical literature is mostly divided between two major models of computation:
the algebraic Blum-Shub-Smale (BSS) machine aka \emph{real-RAM} \cite{ACT97,BCSS98}
and the `analytic' framework of \emph{Computable Analysis} based on Turing Machines \cite{Ko91,Wei00}.
Both give rise to their own elaborate but incomparable \cite{Emperor} computability 
and complexity classifications. 

Recent works \cite{BH98,MullerZ14,Sewon23,Clerical,PBC+24b,Sewon25} reconcile the two models,
combining the best of both worlds in theory and in practice \cite{Mue01},
under the moniker \emph{Exact Real Computation} (\ERC); 
see Subsection~\ref{ss:ERC}.
Indeed, modifying the \naive semantics of real tests 
and adding a suitable Cauchy limit functional (Example~\ref{x:Limit})
yields a formal programming language (iv) 
qualitatively equivalent \cite[\S5.3]{PBC+24b} to Computable Analysis (i).
And our contribution refines this qualitative to polynomial equivalence (i\myprime)=(iv\myprime):
by carefully assigning bit-costs to real operations,
now depending additionally on the working precision parameter $m$.
Beyond handling (and hiding) the length $n$ of numbers like the integer register machine, 
\ERC also takes care of the working precision $m$ on behalf of the user programmer;
while naturally both still do enter in performance analyses.

In similar spirit, the class of real functions
qualitatively computable in Computable Analysis (i) and those within polynomial time (i\myprime)
both have recently been characterized 
(iii+iii\myprime) in terms of Shannon's \emph{General Purpose Analog Computer} GPAC
\cite{BournezGracaPouly17,BlancBournez22}.

\section{Real Number Computation}
\label{s:Real} 

Both Theory and Practice of real number computation are rich topics with extensive literature.

Numerical practice usually builds on (\eg IEEE\,754) \texttt{double} precision floating-point numbers 
with hardware support.
Operations on such data items take time bounded independently of its argument/s,
which is theoretically captured by the aforementioned \emph{real-RAM} model \cite{ComputationalGeometry,BCSS98}:
having registers for reading, storing, processing, and outputting real numbers---as 
entities at unit cost exactly, without incurring nor propagating rounding errors.
This abstraction induces a structural computability and complexity theory parallel to the discrete setting 
\cite{FournierKoiran98,MeerZiegler08,ZieglerKoolen08,SchaeferCardinalMiltzow24}.
Precision questions have to be addressed separately \cite{Condition};
see also Subsection~\ref{ss:FPCost} below.

Computable Analysis on the other hand revolves arguably around precision primarily:
formalized in the Type-2 Turing Machine processing converging sequences of approximations \cite{Wei00},
or the Oracle Turing Machine \cite{Ko91} also translating among families of approximations of variable (but guaranteed) precision
\cite{Grz57}. Both Type-2 and Oracle Turing Machines are 
important for theoretical and historical reasons, but practically inconvenient:
thus the \ERC paradigm recalled in Subsection~\ref{ss:ERC}.
Indeed, both machines are (i\myprime) polynomial-time but not (i\myprimes) linear-time equivalent \cite[Proposition~1.3]{LimZiegler25}, 
essentially because simulating one random-access oracle access on a sequential tape incurs linear overhead.

\begin{remark}
\label{r:MainThm}
In Computable Analysis,
any computable function must necessarily be continuous \cite[\S4.3]{Wei00}; 
and polynomial time requires polynomial modulus of continuity,
see Remark~\ref{r:Modulus}e) below.
\end{remark}

\subsection{User-Controlled Accuracy}
\label{ss:FPCost}

Processing continuous data traditionally proceeds by discretization: up to some suitable error $\varepsilon>0$,
and then operating on these approximations---leaving it to the developer to keep track of error propagation.
Formally consider the set $\IF$ of `parameterized' \emph{fixed}-point numbers
$x=(\pm \: b_{n-1}  \cdots b_1 \: b_0 \: \texttt{.} \: b_{-1} \cdots b_{-m})_2$
with at most $n\in\IZ$ binary digits in front of the radix point
(\ie of absolute value $\leq2^n$) and with at least $m\in\IZ$ reliable bits (=precision) after the radix point.
Basic operations on such data are easily seen to incur computational bit-costs,
and to affect parameters $(n,m)$, as collected in Figure~\ref{f:FPCost}; 
see Subsection~\ref{ss:Stability}.

\begin{figure}[htb]\centerline{%
\begin{tabular}{c@{\quad}|@{\quad}c@{\quad}c@{\quad}c@{\quad}c@{\quad}c@{\quad}c@{\quad}c}
\textbf{operation} & $\sign$   &   $\pm$  &  $\times$       & $1/\cdot$ & $\sqrt{\cdot}$ & $\pi$ \\ 
\hline
\textbf{cost} 	&  \makecell{$\scriptstyle {n+m+}$ \\[-0.5ex] $\scriptstyle {\log|n|}$} &  \makecell{$\scriptstyle {n+m+}$ \\[-0.5ex] $\scriptstyle {\log|n|}$}   & 
\makecell{$\scriptstyle {(n+m)\cdot}$ \\[-0.5ex] $\scriptstyle {\log(n+m)}$} & 
\makecell{$\scriptstyle {(n+m)\cdot}$ \\[-0.5ex] $\scriptstyle {\log(n+m)}$} &
\makecell{$\scriptstyle (n+m)\cdot$ \\[-0.5ex] $\scriptstyle {\log(n+m)}$} &
\makecell{$\scriptstyle {m'\cdot}$ \\[-0.5ex] $\scriptstyle {\log^2m'}$} \\[1.5ex]
\textbf{result $n'$} &  0  &  $n+1$   &     $2n$        &   $n'$    & $\lceil n/2\rceil$  & 2 \\ 
\textbf{result $m'$} &  $\infty$  &  $m-1$   & $m-n-2$      &  ${m-2n'-2}$ &  ${m+n'-1}$
& $m'$  
\end{tabular}}
\caption{\label{f:FPCost}Bit-costs of operations on variable-precision numbers and (worst-case) result parameters}
\end{figure}
\begin{remark}
\label{r:FPCost}
Our asymptotic analyses here and in the sequel count only entire reliable bits,
while practice knows the benefits of `fractional' accuracy propagation \cite{Lester01};
see also Remark~\ref{r:OverEstimate} below.
\begin{enumerate}
\item[a)]
For similar reasons, $n$ here usually denotes only an upper bound 
on the number of binary digits in front of the radix point;
and $m$ denotes a lower bound on the number of reliable bits
after the radix point.
\item[b)]
Call $\bar b$ \emph{normalized} if $b_{n-1}\neq0$.
Note that $x\neq0$ in such normalized representation 
has least $n=\myn(x):=1+\lfloor\log_2|x|\rfloor$. 
Zero cannot be normalized, $\myn(0):=-\infty$.
\item[c)]
We deliberately allow $n,m<0$ as long as $m>-n$:
corresponding to \emph{floating}-point numbers 
with mantissa $\bar b=(\pm b_{n-1} \texttt{.} b_{n-2} \cdots b_{-m})$ of length $n+m$ 
to binary exponent $n$. 
The overall bit-size of a thus represented number $x$
is, up to some constant for delimiters, $n+m+\log|n|>0$:
\item[d)]
Hence the bit-cost charged for $\sign(x)$ in Figure~\ref{f:FPCost}.
Again, in case $x=0$, the cost becomes infinite---in 
agreement with real equality well-known equivalent 
to (the complement of) the Halting Problem \cite{Rice54}; 
see also Subsection~\ref{ss:ERC}a+d) below.
\item[e)]
Floating-point addition (and subtraction) with relative errors is well-known prone to cancellation.
In our fixed-point setting with absolute error bounds, we suppose
both arguments $x_1,x_2$ are represented with same length $n_1=n_2$
and same precision $m_1=m_2$: after possibly discarding trailing
significant digits and prepending leading \sdzero---possibly voiding previous normalization, though.
\item[f)]
Multiplication boils down 
(to adding the exponents and) multiplying the mantissae:
the latter as integers of (again supposed same) binary length $n+m$.
Note that $n+m\geq n'+m'\geq n+m-\calO(1)$ in Figure~\ref{f:FPCost}
means loosing at most constant many reliable bits;
similarly for other operations.
\item[g)]
Inversion $x\mapsto 1/x$ gets approximated by $\log(n+m)$ iterations 
$y_k\mapsto y_{k+1}:=(2-x \cdot y_k)\cdot y_k$,
starting with $y_0=3\cdot2^{-n-1}$ and
using addition and multiplication as above.
Recall that Newton-Raphson is \emph{stable} aka `self-correcting':
allowing the initial iterations to be conducted on the 
most significant digits only and then gradually increasing the
precision, such that the last iteration dominates
the overall cost up to a constant factor.
Regarding the resulting accuracy, 
\begin{equation} \label{e:Inverse1}
\big|\tfrac{1}{x+\varepsilon} \;-\; \tfrac{1}{x}\big| 
\;\; = \;\;
\frac{1}{|x|}\cdot \Big| \frac{1}{1-\delta} \;-\; 1\Big|
\;\;\leq\;\; 2|\delta|/|x| \;=\; 2|\varepsilon|/|x|^2
\end{equation}
provided $\delta:=\varepsilon/x\leq1/2$.
In particular, Estimate~\eqref{e:Inverse1} becomes $\leq2^{-m+2n'-1}$ 
when $|\varepsilon|\leq2^{-m}$ and $\tfrac{1}{|x|}\leq2^{n'}$ for $n'<m$. 
\item[h)]
Similarly, 
$\big|\sqrt{x+\varepsilon}-\sqrt{x}\big|=\sqrt{x}\cdot\big|\sqrt{1+\delta}-1\big|
\leq\sqrt{x}\cdot|\delta|=|\varepsilon|/\sqrt{x}$ 
when $|\delta|\leq1$ where $\delta:=\varepsilon/x$.
Computationally, Newton-Raphson iteration $y_k\mapsto y_{k+1}:=(3-x\cdot y_k^2)\cdot y_k/2$
converges quadratically to $1/\sqrt{x}$; and final multiplication by $x$ yields the result, cmp. \cite[\S5.3]{MullerElementary}.
\item[j)]
See \cite[Table~1]{Borwein2} with $M(n)=n\log n$
for the bit-cost of computing $\pi$.
\qed\end{enumerate}\end{remark}
Indeed, variable-precision arithmetic commonly underlies Reliable Numerics
as model of computation,
also in popular software and libraries \cite{MATLAB,MPFR,ARIADNE}:
like the integer register machine from Section~\ref{s:Introduction}
more convenient than, but both computably and complexity-theoretically 
equivalent to, the Turing model \cite{TPbook}.

\subsection{Quantitative Stability}
\label{ss:Stability}

When combining and iterating basic operations in user-controlled accuracy,
the programmer is responsible for keeping track of the 
(loss in) precision $m'$ evolving over the course of calculations:

\begin{example}[Logistic Map]
\label{x:Logistic}
For $3\leq r\leq 4$, consider the map 
\[ f_r:[0;1]\to[0;1], \; x\mapsto r\cdot x\cdot (1-x) \]
whose iterations are known to fluctuate `chaotically'.
Since $\max_x|f_r'(x)|=r$, an input error $\delta x$ can get
amplified by at most (and for $x=0$ up to) factor $r$;
by $r^k$ for the $k$-fold iterate $f_r^{(k)}$.
In terms of the above `binary' parameter $m\in\IZ$,
this amounts to $m'=m-\log_2 r$ and $m^{(k)}=m-k\log_2 r$.
\qed\end{example}
Such a mapping $m\mapsto m'$ captures 
Weierstra\ss{}' $\varepsilon$--$\delta$ continuity
in `binary' \cite[\S1.3.4]{Aras25}, 
namely as integer exponents to base $\tfrac{1}{2}$.
Alternative to determining the resulting precision $m'$ from given initial $m$,
applications may impose a desired result precision $m'$ and have the program/mer find a suitable initial $m$.
Such a mapping $\mu:m'\to m$ is known as
\emph{modulus} of continuity; see \cite[\S6]{Wei03} or
\cite[Definition~2.12]{Ko91} or \cite[Definition~6.2.6]{Wei00} or \cite[Example~4.8.2]{Koh08a}.
The literature often focuses on non-negative $m$ and $m'$, 
corresponding to $\delta=2^{-m}\ll1$ and $\varepsilon=2^{-m'}\ll1$;
recall Remark~\ref{r:FPCost}.

Beyond finding initial $m$ merely sufficiently large to yield the desired resulting $m'$,
$m$ should not be `unnecessarily' large either;
see also Remark~\ref{p:Stability2} below.
As formalized in Definition~\ref{d:Modulus},
the reference optimum $\mu_f:m'\mapsto m$ is intrinsic to the mathematical problem $f$ under consideration,
which the mapping $\mu_\calA:m'\mapsto m$ induced by any particular algorithm $\calA$ cannot beat. 
Note that, for particular values of $x$, $f_r^{(k)}$ from Example~\ref{x:Logistic} 
may behave better than in the worst-case $x=0$.
This additional dependence on $x$ is captured by a \emph{pointwise} 
(as opposed to uniform) modulus of continuity.
Note that, in order to thus improve the above worst-case estimate $r^k$
of the derivative of $f_r^{(k)}(x)$ for specific $x$,
the Chain Rule yields 
\[ \partial f_r^{(k)} \;=\; \prod\nolimits_{\ell=1}^{k} f'_r\circ f_r^{(\ell-1)} \]
which however is arguably even harder to evaluate than $f_r^{(k)}(x)$ itself.

\begin{definition}
\label{d:Modulus}
Let $\uobarZ=\IZ\cup\{-\infty,+\infty\}$
and write $\mu:\uobarZ\nearrow\uobarZ$ to indicate monotonicity.
\begin{enumerate}
\item[a)]
For a total function $f:X\to Y$ between metric spaces $(X,d)$ and $(Y,e)$,
its modulus of \emph{uniform} continuity is $\mu_f(m'):=\sup_{x\in X} \mu_f(x,m')$:
the pointwise least $\mu:\uobarZ\nearrow\uobarZ$ 
satisfying
$\forall x,x'\in X: d(x,x')\leq2^{-\mu(m')} \Rightarrow e\big(f(x),f(x')\big)\leq2^{-m'}$.
\item[b)]
For a function $f:\subseteq X\to Y$ 
(where ``$\subseteq$'' means partial),
its modulus of \emph{pointwise} continuity $\mu_f$
is the pointwise least mapping $\mu:\dom(f)\times\uobarZ\nearrow\uobarZ$ 
such that $d(x,x')\leq2^{-\mu(x,m')}$ for $x,x'\in\dom(f)$ implies $e\big(f(x),f(x')\big)\leq2^{-m'}$; 
with the convention $\mu(x,m')=+\infty$ at points $x$ of discontinuity,
and $\mu_f(x,m')=-\infty$ to mean 
$e\big(f(x),f(x')\big)\leq2^{-m'}$ regardless of $x'\in\dom(f)$
aka ``\emph{Don't care!}''
\item[c)]
To the operations from Figure~\ref{f:FPCost} 
correspond moduli $\mu:\IF\times\uobarZ\to\uobarZ$ as follows:
\[ 
\mu_{\pm}:m'\mapsto m'+1, \quad
\mu_{\times}:(n,m')\mapsto m'+2+n, \quad  \nonumber
\mu_{\scriptscriptstyle\sqrt{}}:m'\mapsto m'+1-\lceil n/2\rceil 
\] 
and $\mu_{\sign}:0\geq m'\mapsto-\infty$,  $1\leq m'\mapsto 1-n$,
compare Subsection~\ref{ss:ERC}a+d) below.
\end{enumerate}
\end{definition}
\begin{remark}
\label{r:Modulus}
\begin{enumerate}
\item[a)]
Moduli are non-decreasing and satisfy composition inequalities
\begin{equation}
\label{e:Composition}
\mu_{g\circ f}\;\leq\;\mu_f\circ\mu_g, \qquad
\mu_{g\circ f}(x,m')\;\leq\;\mu_f\big(x,\mu_g(f(x),m')\big) \enspace . 
\end{equation}
\item[b)]
By Definition~\ref{d:Modulus},
the pointwise modulus $\mu_f(x,n)$ 
depends on $x$ exactly.
However triangle inequality yields
$\mu_f(x',n)\leq \mu_f(x,n+1)+1$ whenever $d(x,x')\leq 2^{-\mu_f(x,n+1)-1}$.
\item[c)]
A function $f:[0;1]\to\IC$ is H\"{o}lder-continuous
~iff~ it has uniform modulus $\mu_f(n)\leq \calO(n)$ for $n\geq0$;
$f$ is Lipschitz-continuous
~iff~ it has uniform modulus $\mu_f(n)\leq n+\calO(1)$ for $n\geq0$.
\item[d)]
The function 
\[ h:[0;1]\to[0;1], \quad 0<x\mapsto 1/\ln(e/x), \quad 0\mapsto 0 \]
is continuous with exponential modulus of uniform continuity,
which is also its modulus of pointwise continuity at $x=0$:
$\mu_h(n)=\lceil (2^n-1)/\ln2\rceil$ for $n\geq0$.
\\
The composition $h\circ h$ has doubly exponential
modulus of uniform continuity,
which is also its modulus of pointwise continuity at $x=0$.
\item[e)]
Refining Remark~\ref{r:MainThm},
computing a function $f$ with modulus $\mu_f$
on a Turing machine requires time $\mu_f(n+2)$ \cite[Theorem~2.19]{Ko91}.
\end{enumerate}
\end{remark}

\subsection{Example Cost Analyses}
\label{ss:Examples}

The global bit-cost of some integer computation amounts to the sum of the local costs,
where each operation is weighted in dependence on the length $n$ of its operand/s.
Similarly the global bit-cost of an approximate real number computation
amounts to adding the local costs of the operations executed,
each one weighted in dependence on both 
the length $n$ and precision $m$ of its operand/s.

\begin{example}[Logistic Map, Continued]
\label{x:Logistic2}
The following trivial algorithm $\calA$, 
calculating the $k$-fold iterated Logistic Map \naively,
has global modulus $\mu_{\calA}(p)=p+\calO(k)$
and is thus almost optimal w.r.t. $p$:
    \begin{algorithmic}[0] 
    \Prog{$r,x:[0;1]$;\; $k:\IN$}
    \While{$k>0$}
    \State $x\gets r\cdot x\cdot (1-x)$;
    \State $k\gets k-1$;
    \EndWhile
    \EndProg{$x$}
    \end{algorithmic}
Indeed, each multiplication in $[0;1]$ at working precision $m$
incurs local costs $\calO(m\log m)$ and each iteration
`looses' $\calO(1)$ bits of precision
according to Figure~\ref{f:FPCost}.
To attain output error $\leq2^{-p}$, 
the global bit-cost therefore amounts to $\calO\big(k(p+k)\log(p+k)\big)$;
as experimentally confirmed in Appendix~\ref{a:Experiments}.
\end{example}

\begin{example}[Determinant]
\label{x:Determinant}
Consider the problem of calculating the determinant of a given
$d\times d$ matrix $A$ with entries in $[-1;1]$; hence $n=\calO(1)$.
\begin{enumerate}
\item[a)]
Leibniz' formula 
\begin{equation}
\label{e:Leibniz}
\det(A) \;=\; \sum\nolimits_{\sigma\in\calS_d} \sign(\sigma)\cdot a_{1,\sigma(1)}\cdot a_{2,\sigma(2)}\cdots a_{d,\sigma(d)} 
\end{equation}
shows that
$|\partial\det(A)/\partial a_{ij}|\leq (d-1)!$;
hence $\mu_{\det}(p)\leq p+\calO(d\log d)$.
\item[b)]
When implementing Leibniz' Formula,
each multiplication by the next $a_{j,\sigma(j)}$ looses $\calO(1)$ bits of precision;
and so does each addition for the next $\sigma\in\calS$.
The latter $d!$ terms are therefore better not added iteratively,
but instead pairwise bottom-up in a binary tree of depth $\calO(d\log d)$:
This yields an algorithm $\calA$ for the restriction
of $\det$ to $[-1;1]^{d\times d}$ with modulus $\mu_{\calA}(p)=p+\calO(d\log d)$
and global bit-cost $\calO\big((p+d\log d)\cdot d!\big)$:
optimal (a) w.r.t. $p$ but exponential in $d$.
\item[c)]
A numerically more common approach, using $\calO(d^3)$ arithmetic operations and tests,
first transforms $A$ in---not necessarily reduced---row echelon form
(Gaussian Elimination) and then multiplies the diagonal entries.
However this algorithm $\calB$, thus accelerated w.r.t. $d$
and even restricted to compact $[-1;1]^d$,
has no \emph{global} modulus:
The intermediate row echelon form depends \emph{dis}continuously on the matrix entries.
\begin{equation}
\label{e:RowEchelon}
\binom{0 \;\; 1}{0 \;\; 0}   
\quad\surd 
\qquad\text{vs.}\qquad 
\binom{0 \;\; 1}{\varepsilon \;\; 0}   
\quad 
\neovsearrow\quad\binom{\varepsilon \;\; 0}{0 \;\; 1} \quad\surd
\qquad (\varepsilon\neq0) 
\end{equation}
\item[d)]
Algorithm $\calB$ from (c) is discontinuous
when considered operating on real numbers \emph{extensionally}.
However considered \emph{in}tensionally, namely operating on approximations to $A$ up to error $2^{-m}$,
$\calB$ does produce approximations to $\det(A)$ up to error $2^{-p}$
for $p=m-\calO(d\log d)$ according to (a).
Its global bit-cost amounts to $\calO\big((p+d\log d)\cdot d^3\big)$.
\end{enumerate}
\end{example}

\subsection{Exact Real Computation}
\label{ss:ERC}

The \emph{Exact Real Computation} (\ERC) paradigm relieves the user programmer 
from worrying about the underlying discretization and error propagation,
for continuous data types such as real numbers. 
Similarly to \texttt{BigInteger} coinciding with $\IZ$, \ERC provides a basic data type
\texttt{REAL} coinciding with $\IR$ 
including transcendental numbers and operations.

{
\begin{description}
\item[a)]
Software implementing \ERC
ensures that each data item have precision (finite but) 
sufficient for the user program to execute as if on exact data.
In particular real test ``$r>0$?'' tacitly approximates 
$r\in\IR$ sufficiently well to determine its sign,
namely up to error $|r|/2$: 
thus $\mu_{\sign}$ in Definition~\ref{d:Modulus}c)
and condition $m>-n$ in Definition~\ref{d:Contribution}.
\item[b)] 
A working precision $2^{-m}$ sufficient for
both, successful intermediate sign tests and guaranteed output error $\leq2^{-p}$,
can be determined in various ways:
by maintaining symbolic representations of the user calculations \cite{Lam07}; 
or by transparently re-executing user code in higher and higher precision,
until the resulting error is small enough \cite{Mue01};
or by some combination of both.
See Problem~\ref{p:Stability2} below, 
and also \cite{GL01,Men05} for further implementations of \ERC.
\item[c)]
Note that (b) means taking over control from the user program, 
namely changing its \naive command execution order!
This is common in declarative programming
but violates the implicit imperative conceptions 
usually associated with \texttt{C++}.
Experienced imperative coders new to \ERC usually experience some learning curve, 
for instance regarding debugging outputs or `premature optimizations' (Don Knuth):
A program's runtime may seem spent in a user loop, 
while actually it arises elsewhere---such as 
when some earlier lazy real test gets evaluated 
and turns out as (almost) equal; see (d\,e\,f) below.
Nevertheless, \ERC can be considered imperative,
although with multivalued semantics \cite[\S4]{PBC+24b};
and the present work shows that \ERC even allows for bit cost analyses!
Intuitively, non-imperative `behavior' 
(of a program $\calA$ on input $\bar x$ and for output precision parameter $p$, say)
arises from determining a suitable working precision parameter $m$ according to (b);
and if/once that has been found somehow, all commands and tests and loops 
can be simulated in their user-intended imperative order.
Subsection~\ref{ss:Stability2} below formalizes and promotes
such integer mappings $(\calA,\bar x,p)\mapsto m$ 
as numerical algorithmic problem in its own right.
\item[d)]
Determining $\sign(r)$ from sufficiently accurate approximations 
to $r$ as in (a) is feasible when $r\neq0$
(possibly after re-calculation/s in higher working precision),
but inherently infeasible when $r=0$:
in agreement with numerical folklore and with Remark~\ref{r:FPCost}d):
Real sign tests in \ERC are \emph{partial}, or
total with \emph{lazy} `values' in Kleene's trinary logic;
see Example~\ref{x:Kleene} below. 
\item[e)] 
Accordingly, two-fold total \texttt{if-then-else} branching on one Boolean condition
is replaced in \ERC by a \texttt{switch} aka $\mychoose()$
on two or more `unevaluated' (=\emph{lazy}) Kleenean arguments;
with the promise that at least one of the arguments (is defined and) evaluates to \true, 
and the user program must not rely on which \texttt{case} gets executed
when several arguments are \true: 
leading to a \emph{multivalued} semantics \cite{Luc77};
see Example~\ref{x:Choose} below.
\item[f)]
In a nutshell, \ERC exhibits the following features and limitations:
real (including transcendental) numbers are a basic data type 
which can be added and multiplied and divided exactly;
real numbers can be compared, as partial operation with lazy evaluation (recall c).
Integers can be added/subtracted/compared and divided by 2
\cite[\S2.6]{Papadimitriou}, 
but not multiplied---to obtain a two-sorted structure 
with decidable first-order theory \cite[Theorem~6.2]{PBC+24b}.
For the same reason, integers cannot be converted/cast to reals directly
but only as exponent to base 2, useful for error bounds: 
$\error:\IZ\ni z\mapsto 2^z\in\IR$; see also Definition~\ref{d:Contribution} below.
Finally transcendental constants, and functions,
can be expressed by means of an \emph{implicit} Cauchy limit functional
explained next:
\item[g)]
For simplicity, in the present work, 
\ERC is considered strictly first-order \cite[\S1.3]{SICP}:
A program has a constant number of variables;
higher types, such as arrays (indirect addressing), are banned;
functions appear as \emph{second} class `citizens' only.
More precisely an \ERC program \emph{realizes} one \texttt{main()}
and possibly further user functions which may invoke (``call'') each other.
Invoking a real such function $f$ with arguments $\vec x\in\dom(f)$ 
returns its value $y=f(\vec x)$ exactly.
To this end, an \ERC program $\calA$ realizing $f$ receives,
in addition to arguments $\vec x$, a parameter $p\in\IN$;
and must return some approximation $\tilde y_p=\calA(\vec x,p)$ to $y$ up to error $2^{-p}$:
Proceeding to the limit $f(\vec x)=\lim_{p\to\infty} \calA(\vec x,p)$ 
is handled implicitly by the \ERC environment,
see Example~\ref{x:Limit} below.
\end{description}}

\noindent
\cite{PBC+24b} captures the formal syntax and semantics of \ERC
(just with $-p\in\IZ$ instead of $p\in\IN$), and
illustrates programming with nine examples \cite[\S5.2]{PBC+24b}
including the real exponential function.
Appendix~\ref{a:Steep} elaborates another example: 
realizing in \ERC the `steep' function $h$ from Remark~\ref{r:Modulus}d).
Section~\ref{s:ERC2} below formalizes 
\ERC as \emph{Computation Forests} with associated moduli of continuity:
one Computation Tree for each user function including \emph{Main}.

\cite[\S5.3]{PBC+24b} proves that the 
computational power of \ERC
coincides with that of 
Turing Machines in Computable Analysis .
In the sequel we quantitatively strengthen this qualitative statement:
asserting that \emph{real} polynomial-time computations on a Turing Machine correspond to
algorithms in \ERC of polynomial cost, and vice versa.
To this end we first assign local bit-costs to the primitive operations in \ERC,
and then to entire program executions.

\begin{remark}
\label{r:iRRAM}
The \ERC paradigm is heavily inspired by,
and establishes a formal (simplified/idealized) programming language foundation to,
the \textsf{iRRAM} library \cite{Mue01}; which in turn builds on \cite{BH98}.
For instance \textsf{iRRAM} itself, based on \texttt{C++},
does support integer multiplication and dynamic arrays and operators on real functions 
as first-class objects---including an \emph{explicit} limit.
See \cite{Clerical} for a variant of \ERC closer to \textsf{iRRAM} practice;
and \cite{realLib,core2,Aern,ARIADNE} for other implementations (in the spirit) of \ERC.
\end{remark}

\section{Bit-Cost in Exact Real Computation}
\label{s:Contribution}

The bit-cost of an integer operation depends on the length $n$ of its arguments;
and the bit-costs of approximate real number operations 
depend on both magnitude $n$ and working precision $m$.
Figures~\ref{f:IntCost} and \ref{f:FPCost} motivate assigning
\emph{local} bit-costs in \ERC as follows:

\begin{definition}
\label{d:Contribution}
To each basic operation $\omicron$ supported by \ERC (as in Subsection~\ref{ss:ERC}),
assign \emph{local} $\cost(\omicron)$ up to constant factors/offsets as follows, in dependence on
the (binary) magnitude parameter $n\geq\myn(y)=1+\lfloor\log_2|y|\rfloor\in\IZ$ of operand $y$ and---for 
real arguments---also on the working precision parameter $m\in\IZ$. 
Condition $m>-n$ captures that at least one digit must be reliable; 
and $\alpha:=\log\max\{n,m\}$ is charged to access the least/most significant digit.
\begin{center}
\rm\begin{tabular}{c@{\;}|@{\;}c|@{\;}c}
\textbf{operation} & \textbf{bit-cost}$(n,m)$ & \textbf{comment} 
\\ \hline
\makecell{adding/subtracting \\[-0.3ex]
comparing/halving integers} & $n$ & Fig~\ref{f:IntCost} \\[+0.5ex]
adding/subtracting two reals	& $n+m\;+\alpha$ & Remark~\ref{r:FPCost}e) \\
multiplying two reals		& $(n+m)\cdot\log(n+m)+\alpha$  & Remark~\ref{r:FPCost}f) \\
inverting a real 		& $(n+m)\cdot\log(n+m)+\alpha$  & Remark~\ref{r:FPCost}g) \\
real test $\sign(y)$		& $\max\{-n,1\}\;+X$ & Subsection~\ref{ss:ERC}a) \\[+0.3ex]  
$\error:\IZ\ni z\mapsto 2^z\in\IR$ & $n$ & \makecell{$n'=2^n, m'=\infty$ \\[-0.3ex] Subsection~\ref{ss:ERC}f} \\[+0.5ex]
\mychoose$(\tau_0,\ldots,\tau_{K-1})$ & $\min\{\cost(\tau_k):\tau_k=\true\}$ & Example~\ref{x:Choose} \\
$\lim_k y_k$ & $\cost(y_{m+1},m+1)$ & Example~\ref{x:Limit}  \\
call $f(\vec y)$ & $\cost\big(\calA,\vec y,m\big)$ & Subsection~\ref{ss:ERC}g\qed
\end{tabular}
\end{center}
\end{definition}
The above local cost assignments 
depend on the intermediate values' magnitude $n$ and working precision $m$, 
both of which evolve throughout the computation;
see Example~\ref{x:C}d) above and Remark~\ref{r:Local} below.
Similar to high-level integer programming languages (Section~\ref{s:Introduction}),
and as opposed to user-controlled accuracy calculations (Subsection~\ref{ss:FPCost}),
\ERC relieves the programmer from maintaining suitable $n$ and $m$;
but cost analyses still need to keep track of (now both of) them.

\begin{remark}
\label{r:Local} 
\begin{enumerate}
\item[a)]
Recall from the integer register machine (Section~\ref{s:Introduction}) that the intermediate length parameter $n$ is initially determined
by the argument $\vec x$ passed to the program, and then naturally
evolves with (and, reflecting the imperative paradigm, 
in the order of) its operations being executed---basically by forward induction on the number $s$ of operations: 
$n_0:=\myn(\vec x)=\max\big\{d,\myn(x_1)+\cdots+\myn(x_d)\big\}$ and $n_{s+1}:=\myn_{\omicron_s}(n_s)$,
where $\omicron_s$ denotes the $s$-th primitive performed,
and $\myn_{\omicron}$ captures the induced growth in binary length
from argument to return value.
\item[b)]
The working precision parameter $m$ in \ERC on the other hand evolves `backwards':
such as to guarantee the given \emph{output} error bound $2^{-p}$;
and for all real sign tests executed along the way 
to succeed according to Subsection~\ref{ss:ERC}a).
Subsection~\ref{ss:Stability2} formalizes this backward evolution of working precision.
\item[c)]
Recall that, for integer register machines,
generic polynomial cost (ii\myprime in Remark~\ref{r:ChurchTuring})
is equivalent to both being polynomially bounded:
the number $N$ of commands executed
and the worst-case length $n$, 
namely of the largest intermediate value arising.
\end{enumerate}
\noindent
Similarly, as captured in Definition~\ref{d:ERC}c) below,
\emph{polynomial cost in \ERC} (iv\myprime)
means that all three are polynomially bounded:
number of commands, length of the largest value,
and working precision $m$ sufficient for  
real sign tests to succeed and for guaranteed output error $\leq2^{-p}$.%
\end{remark}
Regarding (b), the modulus from Definition~\ref{d:Modulus} already formalizes 
backwards dependence of input precision on output precision: 
for single, unary functions.
Subsection~\ref{ss:Modulus2} below refines and extends 
said modulus to multivariate partial functions;
and Subsection~\ref{ss:Choose} to generalized predicates and multifunctions, 
like $\mychoose()$.
Subsection~\ref{ss:Round} applies these in order to analyze
two example \ERC algorithms for multivalued integer rounding:
one incurring cost exponential and the other polynomial in $n+m$.
The latter one enters crucially in Subsection~\ref{ss:Simul1}
for the efficient simulation of real Turing computation in \ERC.
The converse is treated in Section~\ref{s:ERC2}.

\subsection{Quantitative Stability of Algorithms}
\label{ss:Stability2}

Stability analysis is a core task in Numerics \cite{Condition}:
stability of both functions and algorithms,
in the sense of how much an input perturbation affects the output,
quantitatively captured by moduli of continuity.
Naturally a algorithm $\calA$ computing some function $f$
cannot have better stability than $f$ itself;
and one may want to design $\calA$ with modulus close to that intrinsic to $f$.
A `small' modulus means that calculations can be performed with `few' bits,
which tends to reduce the computational cost per operation---although $\calA'$ with
smaller modulus might in turn need more operations,  
at possibly a trade-off.
Subsection~\ref{ss:Examples} for instance has manually estimated the loss of precision incurred by
some numerical example algorithms, and has compared that to the function they compute.
Reliable Numerics, such as interval arithmetic, 
considers computations that enhance their approximate output
with guaranteed error bounds in dependence on the input precision,
automatically keeping track of rounding errors and error propagation 
on behalf of the user. 
The present subsection is concerned with the following setting:

\begin{problem}
\label{p:Stability2}
Given a reliable numerical algorithm $\calA$ and continuous-type input $\bar x$
and an integer $p$ describing the desired output error bound $\leq2^{-p}$:
determine an integer error parameter $m$ for perturbed inputs $\bar x'=\bar x\pm2^{-m}$
that guarantees $\calA$ to produce outputs $y$ within error $\leq2^{-p}$;
recall Subsection~\ref{ss:ERC}a).

Such a modulus $\mu_\calA(\bar x,p)\mapsto m$ thus must be conservative.
One may additionally ask it to be `close' (\eg up to polynomial
or up to constant factors or offsets) to optimal, namely to the
least such mapping associated with $\calA$.
\qed\end{problem}
\noindent
Both problems are well-known in Reliable Computing \cite{VANDERHOEVEN200652}, 
although often informally with emphasis on practice. 
The so-called \emph{bottom-up} method starts with some initial candidate error parameter $m$,
executes $\calA$ and records the resulting result error parameter $m'$
and, if $m'<p$, restarts with larger $m$.
The \emph{top-down} method simulates $\calA$ once
and records its calculations symbolically, such as in a directed acyclic graph (dag); 
then promotes the accuracy requirements from root ($\leq2^{-p}$) 
on downwards, level by level, until reaching the children/inputs
when a desired $m$ can be read off:
Compare \cite[\S2.2]{realLib} or \cite[\S3.4]{core2}.

Section~\ref{s:ERC2} below formalizes the common core idea behind both approaches:
understanding an algorithm $\calA$ as a large composition of computational primitives,
each one associated with a modulus as in Definition~\ref{d:Modulus}c);
and combining those moduli of the operations executed during a run of $\calA$ on input $\bar x$,
in \emph{reverse} composition according to Estimate~\eqref{e:Composition2}. 
Specifically Subsection~\ref{ss:SLP} thus assigns moduli
to a suitable class of Straight-Line Programs (without branches);
Subsection~\ref{ss:CT} to computation Trees (with non-deterministic branches),
and Subsection~\ref{ss:CF} to computation forests (with mutual function calls).

\begin{remark}
\label{r:OverEstimate}
Questions about composite calculations, combined from individual simple operations, can be notoriously 
difficult already in the discrete/integer realm: recall Rice's or Goodstein's Theorems.
The map $x\mapsto x(1-x)$, such
as in Example~\ref{x:Logistic2} for $r=1$, has values bounded by $1/4$ for $x\in[0;1]$;
but intermediate expression $y:=1-x$ can exhaust entire $[0;1]$, 
and so does the product $xy$---when disregarding the correlation between $x$ and $y$.

Numerical error propagation estimates are well-known to suffer 
from similar overestimates \cite{Neumaier1993}, 
also intrinsically \cite{AllenderBKM09}.
One approach to mitigate such (variants of essentially the same) problems
combines several basic operations into a 
more involved so-called \emph{Taylor Model};
whose bounds and stability are analyzed manually 
and told to (rather than estimated by) 
the computer \cite{TaylorModels}.
\end{remark}
The rest of this section refines and applies
the modulus of continuity for
backwards error propagation estimates
according to Remark~\ref{r:Modulus}a).

\subsection{Advanced Quantitative Continuity}
\label{ss:Modulus2}

This subsection refines the well-known modulus of continuity (Definition~\ref{d:Modulus})
in two ways: 
from univariate to multivariate functions, 
and to (a small class of) multivalued functions.

In case the continuous function $f$ is $K$-variate, 
Definition~\ref{d:Modulus} applies with respect to some (implicit) metric on the product space
$X=X_1\times\cdots\times X_K$. 
Alternatively we explicitly consider pointwise moduli to be of type
$\mu:\subseteq\big(\prod\nolimits_{k=1}^K X_k\big)\times\uobarZ\nearrow\uobarZ^K$,
\begin{equation}
\label{e:Multidim}
\bigwedge\limits_{k=1}^K d_k(x_k,x'_k)\leq2^{-\mu_{f,k}(x_1,\ldots,x_K,m')} 
\;\;\Rightarrow\;
e\big(f(x_1,\ldots,x_K),f(x'_1,\ldots,x'_K)\big)\leq2^{-m'}
\end{equation}
and similarly in the uniform case $\mu:\uobarZ\nearrow\uobarZ^K$.
Projection $(x_1,\ldots,x_K)\mapsto x_k$ for instance
has modulus 
\[ \mu:m'\mapsto (-\infty,\ldots,-\infty,m',-\infty,\ldots,-\infty) \enspace ; \]
recall Convention~\ref{d:Modulus}a).

\begin{remark}
\label{r:Composition}
For bivariate functions $f:X\times Y\to X'$ and $g:X\times Y\to Y'$ and $h:X'\times Y'\to Z$,
Estimates~\eqref{e:Composition} become, componentwise, 
\begin{eqnarray}
\mu_{h\circ(f,g)} &\leq& \big(
\max\{\mu_{f,1}\circ\mu_{h,1},\mu_{g,1}\circ\mu_{h,2}\}
\:,
\max\{\mu_{f,2}\circ\mu_{h,1},\mu_{g,2}\circ\mu_{h,2}\}
\big) \nonumber \\
\label{e:Composition2}
\mu_{h\circ(f,g)}(x,y,m') &\leq& 
\Big( \max\big\{ 
\mu_{f,1}\big(x,y,\mu_{h,1}(f(x,y),g(x,y),m')\big), \\ \nonumber
& & \qquad\quad
\mu_{g,1}\big(x,y,\mu_{h,2}(f(x,y),g(x,y),m')\big)\big\} 
\:,\:  \\ \nonumber & & \quad \max\big\{
\mu_{f,2}\big(x,y,\mu_{h,1}(f(x,y),g(x,y),m')\big), \\ \nonumber
& & \qquad\quad\:
\mu_{g,2}\big(x,y,\mu_{h,2}(f(x,y),g(x,y),m')\big)\big\} 
\Big) \enspace .  \nonumber
\end{eqnarray}
\end{remark}
A multivariate continuous function $f$ need not have 
a least modulus as in Definition~\ref{d:Modulus}, though.
Instead of `the' modulus, one thus has to refer to \emph{a} modulus of $f$;
and \emph{least} modulus becomes \emph{Pareto}-optimal moduli:
pairwise incomparable and least in at least one component,
such as $\mu_\vee(\true,\true,1)$ and $\mu_\wedge(\false,\false,1)$ below.

\begin{example}[Kleenean Connectives]
\label{x:Kleene}
Recall Kleene's trinary logic \cite[p.336]{Kleene52}
with values $\IK=\{\false, \true, \unknown\}$,
equipped with the discrete metric.
In view of the absorption laws,
the binary Boolean connectives $\vee,\wedge$
have the following moduli of pointwise continuity:
\begin{equation}
\label{e:Kleene}
\begin{array}{c|ccc}
\mu_\vee & \false & \true & \unknown
\\ \hline
\false & (1,1) & (-\infty,1) & (1,1) \\[0.7ex]
\true & (1,-\infty) & {(1,-\infty)}\atop{(-\infty,1)} & (1,-\infty) \\[0.7ex]
\unknown & (1,1) & (-\infty,1) & (1,1) 
\end{array} 
\qquad
\begin{array}{c|ccc}
\mu_\wedge & \false & \true & \unknown
\\ \hline 
\false & {(1,-\infty)}\atop{(-\infty,1)} & (1,-\infty) & (1,-\infty) \\[0.7ex]
\true & (-\infty,1) & (1,1) & (1,1) \\[0.7ex]
\unknown & (-\infty,1) & (1,1) & (1,1) 
\end{array}
\end{equation}
where $\mu_\vee,\mu_\wedge$ means on argument $(\text{row},\text{column},m')$
for any $m'\geq1$; $\mu_\vee(x,y,m'),$\linebreak$\mu_\wedge(x,y,m')=(-\infty,-\infty)$ for $m'\leq0$.
Also $\mu_\neg(x,m')=1$ for $m'\geq1$ and $\mu_\neg(x,m')=-\infty$ for $m'\leq0$.
\end{example}

\begin{example}[Modulus of Fast Cauchy Limit]
\label{x:Limit}
The field of polynomial-time computable real numbers 
is closed under `fast' Cauchy limits \cite{Mue86a}:
Whenever a sequence $\bar y=(y_k)\in\IR^\IN$ is uniformly computable in polynomial time,
and if it furthermore satisfies 
\begin{equation}
\label{e:Cauchy}
\forall k,\ell\in\IN: \; |y_k-y_\ell| \;\leq\; 2^{-k}+2^{-\ell}
\enspace ,
\end{equation}
then $y=\lim_k y_k$ is again computable in polynomial time.
The subset of $\IR^\IN$ according to Condition~\eqref{e:Cauchy} is topologically closed; 
and on this domain,
said polynomial-time computable limit functional 
has the following family of Pareto-optimal 
moduli of continuity $\mu_{\lim}:\IZ\to\IZ^\IN$ independent of $\bar y\in\dom(\lim)$:
\begin{eqnarray}
\IN\ni m' \;\mapsto\; (-\infty,\cdots,-\infty,&\overset{m\text{-th}}{\pmb{+}\infty}&,{-\infty},-\infty,-\infty,\ldots) \nonumber \\
\IN\ni m' \;\mapsto\; (-\infty,\cdots,-\infty,&\pmb{-}\infty&,m'\!+\!1,-\infty,-\infty,\ldots) 
\label{e:Limit} \\ \nonumber
\IN\ni m' \;\mapsto\; (-\infty,\cdots,-\infty,&\pmb{-}\infty&,-\infty,m'\!+\!1,-\infty,\ldots) \\
\nonumber & & \qquad\qquad \ddots 
\end{eqnarray}
and 
$0>-m'\mapsto (-m'+1,-\infty,-\infty,\ldots)$;
see Subsection~\ref{ss:ERC}g).
\end{example}
%

\subsection{Generalized Predicates}
\label{ss:Choose}

Classical predicates formalize two-valued total tests.
On a connected domain, any such test must be discontinuous---and hence uncomputable,
recall Remark~\ref{r:MainThm}.
One remedy considers \emph{partial} tests, such as $\sign:\IR\setminus\{0\}\to\{\true,\false\}$.
Note the subtle but important difference to a total test, with Kleenean value $\sign(0)=\unknown$ from Example~\ref{x:Kleene}.
In order to use said partial tests in a total program,
\ERC provides the \mychoose multifunction
which receives as arguments several `overlapping' unevaluated (\ie \emph{lazy}) tests;
see Example~\ref{x:Choose} below.
Formally, a multifunction may violate the extensionality axiom $x=x'\Rightarrow f(x)=f(x')$.
These are well-known unavoidable in Computable Analysis \cite{Luc77},
where extensionality requires that different encodings of the same real argument yield (possibly different encodings, but of) the same value.
Notions of continuity for relations common in Analysis, such 
as \emph{hemi}continuity, tend to be too strong or too weak 
to capture this computational phenomenon qualitatively \cite{PZ13},
not to mention quantitatively. 
The following formalization restricts to piecewise constant discretely-valued multifunctions,
capturing partial non-extensional (``non-deterministic'') predicates 
with possibly more than two outcomes---and 
in particular the \mychoose operation from \ERC
\cite[Examples~2.2(3)]{PBC+24b}; see Example~\ref{x:Choose} below.

\begin{definition}
\label{d:MultiPred}
Fix an at most countable (and at least two-element) 
set $\IT=\{\false,\true,\ldots\}$ 
of generalized truth values,
and a metric space $(X,d)$. 
A \emph{generalized} $T$-fold predicate on $X$
is a subset $P\subseteq X\times\IT$, where $T=\Card(\IT)$.
For $x\in X$, $P(x):=\{\tau:(x,\tau)\in P\}$ denotes the (possibly empty) set of truth values 
of predicate $P$ on $x$; written $x\mapstoto P(x)$.
Its domain is $\dom(P)=\{x:P(x)\neq\emptyset\}$;
$P^{-1}[\tau]=\{x:\tau\in P(x)\}$ the set of arguments 
that \emph{may} produce truth value $\tau\in\IT$.
\begin{enumerate}
\item[a)]
Call $P$ \emph{pointwise continuous} if it holds
\begin{equation}
\label{e:MultiCont}
\forall x\in\dom(P) \; \exists \delta>0 \; \exists \tau\in P(x) \; 
\forall x'\in\dom(P): \quad d(x,x')\leq\delta \;\Rightarrow \; \tau\in P(x') \enspace . 
\end{equation}
\item[b)]
A modulus of \emph{pointwise} continuity of $P$ is $\mu:\dom(P)\times\uobarZ\nearrow\uobarZ$
such that $\delta:=2^{-\mu(x,k)}$ satisfies Equation~\eqref{e:MultiCont}.
\item[c)]
Similarly $\mu:\dom(P)\times\uobarZ\nearrow\uobarZ^K$ for $K$-variate $P$,
\ie, in case $X=X_1\times\cdots\times X_K$ is a product of metric spaces:
\begin{multline*}
\forall \vec x=(x_1,\ldots,x_K)\in\dom(P) \;\; \exists \tau\in P(\vec x) \;\; 
\forall \vec x'=(x'_1,\ldots,x'_K)\in\dom(P): \\
\bigwedge\nolimits_{k=1}^K d(x_k,x'_k)\leq2^{-\mu_k(\vec x,k)} 
\;\;\Rightarrow \; \tau\in P(\vec x') \enspace . 
\end{multline*}
\end{enumerate}
\end{definition}
As mentioned after Remark~\ref{r:Composition}, a continuous multivariate function
may not have an optimal pointwise modulus---nor a uniform modulus,
even when restricting to compact domains.

\begin{example}
\label{x:Choose}
Different from \ERC's partial real test,
recall \cite[\S6]{Soft} the so-called \emph{soft} or \emph{fuzzy} test
$\sign_\varepsilon(x)$: 
allowed to return $\true$ whenever $x\geq-\varepsilon$ and to return $\false$ whenever $x\leq+\varepsilon$.
Intuitively, the `overlap' in case $-\varepsilon\leq x\leq+\varepsilon$ 
makes $\sign_\varepsilon$ total,
but non-extensional a multifunction as above.
It can be expressed as
$\mychoose(x<+\varepsilon,x>-\varepsilon)$, 
with \ERC's built-in multifunction
\begin{equation}
\label{e:Choose}
\mychoose(\tau_0,\ldots,\tau_{K-1}) \;=\; \text{``some'' } k \text{ s.t. } \tau_k=\true
\end{equation}
where $(\tau_0,\ldots,\tau_{K-1})\in\IK^K$ is promised to satisfy $\exists k: \tau_k=\true$;
again note the multivaluedness when more than one $k$ satisfies $\tau_k=\true$.
It has Pareto moduli 
\[
\dom(\mychoose)\times\IN_+ \;\ni\; (\vec\tau,m') \;\mapsto\; (-\infty,\cdots,-\infty,\overset{k\text{-th}}{1},-\infty,\ldots) 
\]
for $m'\geq1$ and any $k$ with $\tau_k=\true$.
Indeed \ERC internally is supposed to search $k$ 
for which $\tau_k=\true$ is \emph{cheapest}
to evaluate: thus the $\min$ (not $\max$)
in Definition~\ref{d:Contribution}.
\end{example}
%

\subsection{Examples: Multivalued Integer Rounding}
\label{ss:Round}

Consider the problem of rounding a given real to a near integer.
A non-constant function 
mapping the connected real line to the discrete integers cannot be continuous,
nor computable (Remark~\ref{r:MainThm}).
Hence \cite[\S5.2.4]{PBC+24b} considers rounding as a multifunction,
recall 
Subsection~\ref{ss:Choose}:
\begin{equation}
\label{e:Round}
\Round \;:\; \IR \;\ni\; x \;\mapstoto \; k \;\in\;\IZ, \quad x-1<k<x+1
\enspace .
\end{equation}
So both $0$ and $1$ (and none else) are possible return values 
when rounding $x=1/2$, or any $x\in(0;1)$ for that matter.

\cite[\S5.2]{PBC+24b} demonstrates 
programming in \ERC (Subsection~\ref{ss:ERC}f+g)
with nine `toy' examples, two of them (\#5 and \#6) 
both calculating $\Round$ according to Equation~\eqref{e:Round}
employing the generalized predicate \texttt{choose()}:
the first one (Example~\ref{x:URound}) with cost softly linear,
the second one (Example~\ref{x:BRound}) with cost polylogarithmic,
in the value of the real argument---which in turn is commonly considered exponential in its length.
Recall (Subsection~\ref{ss:ERC}) that some implementations of \ERC
might, while others might not, simply read off the integer part from their
internal real representation; but such implementation details
and dependencies should be hidden from the high-level user.

\begin{example}
\label{x:URound}
Algorithm~\#5 in \cite[\S5.2]{PBC+24b} initializes $k$ to zero;
then decrements $x$ and simultaneously increments $k$
repeatedly, until $x$ becomes negative.
(We focus here on the case of positive arguments;
negative arguments dually proceed by repeatedly
incrementing $x$ and decrementing $k$\ldots)
More precisely, after each iteration, it employs \texttt{choose()}
to simultaneously perform partial tests $x<1$ and $x>1/2$;
and abort in case the first test evaluates to \true
and continue in case the second test evaluates to \true
and non-deterministically either abort or continue
in case both evaluate to \true:
recall Subsection~\ref{ss:Choose}. 
Thus the loop gets executed (and $k$ incremented and returned as result) 
either $\lfloor x\rfloor$ times or $1+\lfloor x\rfloor$ times:
in accordance with Specification~\eqref{e:Round}.
According to Definition~\ref{d:Contribution},
each loop iteration incurs bit-cost $\calO(\log|x|)$
for decrementing $x$; $\calO(\log|k|)\leq\calO(\log|x|)$ 
for incrementing $k$.
And while tests ``$x<1$?'' and ``$x>1/2$?'' may be unboundedly
expensive for $x\approx1$ and $x\approx1/2$, respectively;
the cost incurred by them wrapped in \texttt{choose()} 
never exceeds $\calO(\log|x|)$ according to Definition~\ref{d:Contribution}:
adding up to total cost $\calO(|x|\cdot\log|x|)$.
\end{example}
\begin{example}
\label{x:BRound}
Recall that binary search for the, say, leading bit of $x$ 
is (unique unless dyadic and) discontinuous and uncomputable!
Instead, Algorithm~\#6 in \cite[\S5.2]{PBC+24b} 
extracts the highly non-unique \emph{signed} binary expansion
of $y\in(-1;1)$: using a three-fold combined test with overlap
\[ \mychoose(y<0\:,\: -1<y \wedge y<1 \:,\: 0<y) \;-\; 1 \enspace . \]
Spelled out, it returns leading digit $b:=-1$ in case $y<0$;
digit $b:=0$ in case $-1<y<1$; and digit $b:=+1$ in case $y>0$.
Then replace $y:=2(y-b)$ and $k:=2k+b$ and repeat.

Said loop gets executed $j$ times, where $j\in\IN$ 
has previously been determined similarly non-deterministically 
such that $y:=x\cdot 2^{-j}$ lies in $(-1;1)$; 
so $j\leq\calO(\log|x|)$.
As in Example~\ref{x:URound}, each
iteration of the loop body costs $\calO(\log|x|)$;
and due to their overlap, the three tests
wrapped in \texttt{choose()} also incur at most $\calO(\log|x|)$:
adding up to total cost $\calO(\log^2|x|)$.
\end{example}

\subsection{Real Turing Machine Simulation in \ERC}
\label{ss:Simul1}

Recall the oracle Turing machine model $\Machine^?$, defining polynomial time complexity
of partial real functions $f:\subseteq\IR^d\to\IR$ \cite[\S2.4+\S2.5]{Ko91}.
Essentially, on input of any precision parameter $p\in\IN$ and given access 
to any oracle tuple $\vec\Phi=(\Phi_1,\ldots,\Phi_d)$ encoding any $\vec x=(x_1,\ldots,x_d)\in\dom(f)$ in the below sense,
$\Machine^{\vec\Phi}(p)$ must terminate and output some integer $z_{p}$ such that $|f(\vec x)-z_{p}/2^{p}|\leq2^{-p}$
within a number of steps polynomial in $p+\myn(\vec x)$ regardless of $\vec\Phi$;
otherwise the behaviour of $\Machine^{\vec\Phi}(p)$ is arbitrary.
Here a (string-function) oracle $\Phi_j$ encoding
$x_j\in\IR$ is any total length-monotone mapping 
\begin{equation}
\label{e:Oracle}
\Phi_j:\ONE^*\to\TWO^{+} \quad\text{such that}\quad
|x_j-\bin\big(\Phi_j(\sdone^m)\big)/2^m|\leq2^{-m} \enspace , 
\end{equation}
where $\bin(b_0,\ldots,b_{n},b_{n+1})=(-1)^{b_{n+1}}\cdot(b_0+2b_1+\cdots+2^{n}b_{n})$
and $(b_{n-1},b_{n})\neq(0,0)$ in case $n\geq1$ to prevent excessive padding \cite[Example~7.2.3]{Wei00}.

\begin{theorem}
\label{t:Simul1}
Let $f(\vec x)\in\IR$ on domain $D\subseteq\IR^d$ 
be computed by an oracle Turing machine 
in polynomial time. 
There is an \ERC program realizing $f$ at polynomial cost.
\end{theorem}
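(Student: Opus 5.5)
The plan is to simulate the oracle Turing machine $\Machine^?$ step by step inside an \ERC program, replacing each oracle query by an \ERC computation that produces a valid oracle answer from the exact real argument. The program realizes $f$ through the implicit limit of Subsection~\ref{ss:ERC}g). It receives $\vec x$ and the precision parameter $p$, and it returns $z_p\cdot\error(-p)$, an approximation within $2^{-p}$ of $f(\vec x)$. The Turing machine itself is purely discrete, so its configuration can be stored in integer variables, using $\Round$-style arithmetic and halving. The work tape can be encoded as a constant number of integers: the tape contents left and right of the head are read as binary numbers. A head move then becomes a doubling (via addition $z+z$) or a halving, together with parity extraction. Each step costs $\calO(t)$, where $t$ bounds the running time and hence the tape length. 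The oracle query tape is handled the same way; there we only need to track the query length $m$ as an integer counter. Since \ERC forbids indirect addressing and integer multiplication, all of this uses only addition, comparison and halving, which Definition~\ref{d:Contribution} charges linearly in $n$. Over polynomially many steps with $n=\poly(p+\myn(\vec x))$, the total cost stays polynomial.

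The key step is answering a query $\sdone^m$ to $\Phi_j$. We need an integer $z$ with $|x_j-z/2^m|\leq2^{-m}$. The plan is to compute the real $y:=x_j\cdot\error(m)$ (one multiplication) and then apply the binary-search rounding of Example~\ref{x:BRound} to obtain $z=\Round(y)$. Its multivalued output satisfies $|y-z|<1$, hence $|x_j-z/2^m|<2^{-m}$, as required by \eqref{e:Oracle}. By Example~\ref{x:BRound}, this costs $\calO(\log^2|y|)=\calO((m+\myn(x_j))^2)$ plus the working precision of the \mychoose tests. The resulting integer $z$ must then be written onto the oracle answer tape as a binary string. This happens bit by bit through halving and parity, again at polynomial cost, and produces a string of the normalized form demanded by \eqref{e:Oracle}. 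Multivaluedness does no harm: every choice yields some legitimate oracle $\Phi_j$ encoding $x_j$, and $\Machine^{\vec\Phi}$ is correct for every such oracle. For repeated queries of the same $m$, length-monotonicity and consistency could be an issue. We can either memoize or, more simply, observe that Ko's machine only needs each answer to be valid, so Ko's correctness argument applies to the oracle actually simulated.

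The remaining and, I expect, main obstacle is bounding the working precision $m_{\mathrm{wp}}$ that \ERC must use, as in Remark~\ref{r:Local}c). Only the real operations inside the oracle simulations and the final output carry precision demands. The final output $z_p\cdot\error(-p)$ needs precision about $p$ in the sense of Definition~\ref{d:Modulus}c). Each \mychoose test in Example~\ref{x:BRound} has overlap of constant width relative to the scaled $y$. Following Example~\ref{x:Choose}, some true alternative then needs only precision $\calO(1)$ on the current $y$, which, propagated backward through the doublings $y:=2(y-b)$ and the initial multiplication by $\error(m)$, amounts to precision $\calO(m+\myn(x_j))$ on $x_j$ by the composition rule \eqref{e:Composition2}. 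Since query lengths satisfy $m\leq t(p+\myn(\vec x))$, every required precision is polynomial. I would make this rigorous by bounding each real primitive's modulus via Definition~\ref{d:Modulus}c) and summing. The subtle point is ensuring that the \mychoose cost uses the $\min$ in Definition~\ref{d:Contribution} and remains bounded even when some test is near its boundary. Combining the polynomial step count, the polynomial lengths and the polynomial precision yields polynomial cost.
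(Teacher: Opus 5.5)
Your simulation follows the paper's proof. Both encode the tape halves left and right of the head as integers; the paper uses $|\Gamma|$-ary rather than binary expansions, which is immaterial. Both answer an oracle query by $\Round(x_j\cdot 2^m)$ using the binary rounding of Example~\ref{x:BRound}, with $m\leq t(\cdot)$, and both return a $2^{-p}$-approximation so that the implicit limit yields $f(\vec x)$. Your backward bound on the working precision needed by the \mychoose tests goes beyond the paper's proof. The paper simply takes the $\calO(m^2)$ cost of $\Round$ from Example~\ref{x:BRound}, and it assumes $x_j\in[0;1]$ where you allow general $\myn(x_j)$.

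One step, however, is not expressible in \ERC as you wrote it: returning $z_p\cdot\error(-p)$. By Subsection~\ref{ss:ERC}f), \ERC can neither cast the integer $z_p$ to a real nor multiply it with one; integers enter the real sort only as exponents via $\error$.
\begin{itemize}
\item The paper handles exactly this point by `shadowing' the integer operations that produce $z_p$ with real operations. This yields a real $r_p=z_p$, which it then multiplies by $2^{-p}$.
\item Within your own framework the fix is just as cheap. Extract the bits of $|z_p|$ by halving and parity, as you already do when writing oracle answers. For each position $k$ with bit $1$, add $\error(k-p)$ to a real accumulator, and negate at the end if $z_p<0$. This takes polynomially many real additions of polynomial length and precision.
\end{itemize}

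Separately, your `more simply' justification for repeated queries does not go through. If $\sdone^m$ is asked twice and $\Round$ returns two different valid answers, no single oracle function $\Phi_j$ is consistent with the run, so Ko's correctness guarantee does not cover it. Use your memoization alternative instead. For example, first transform $\Machine^?$ in polynomial time so that it stores each answer on a work tape and never repeats a query. The paper's proof does not discuss this issue at all.
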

Common simulations of (discrete) Turing machines by Register machines
\cite[Theorem~5]{Boas90} employ an unbounded number of registers with indirect addressing,
which we prohibit in \ERC, recall Subsection~\ref{ss:ERC}g).

\begin{proof}[Theorem~\ref{t:Simul1}]
W.l.o.g. let $\Gamma=\{0,\ldots,|\Gamma|-1\}$ denote the tape alphabet of $\Machine^?$, 
with $0$ for empty cells.
Similarly to \cite[Example~8.1]{BoolosBurgess}, encode
each tape's contents $(\gamma_0,\gamma_1,\ldots,\gamma_{h-1},\gamma_h,\gamma_{h+1},\ldots)$
as two natural numbers in $|\Gamma|$-ary expansion,
where $h$ denotes the current position of the read/write head:
$T:=\sum_{\eta\geq h} \gamma_{\eta}\cdot|\Gamma|^{\eta-h}$ and $T':=\sum_{\eta=0}^{h-1} \gamma_{\eta}\cdot|\Gamma|^{h-1-\eta}$.
Since $\Machine^?$ is promised to make at most $t(n)$ steps,
both $T$ and $T'$ have binary length bounded by $\calO(t(n))$.
Replacing $\gamma_h=0$ with something else amounts to one addition;
moving the head one cell left/right involves multiplying either $T$ or $T'$ by $|\Gamma|$,
via $|\Gamma|$-fold addition also at local cost $\calO(t(n))$ each
according to Definition~\ref{d:Contribution}.
Moreover the contents $\gamma_h$ of the cell at the current head position
can be extracted/erased using long division with remainder by $|\Gamma|$;
which boils down to $\calO(\polylog t(n))$ halving operations, 
each incurring bit-cost $\calO(t(n))$.

It remains to treat $\Machine$'s oracle queries and output.
For the former, replace $\Phi_j(\sdone^m)$ with $\Round(x_j\cdot2^m)$
at cost $\calO(m^2)$ according to Subsection~\ref{ss:Round}
since $x\in[0;1]$. Indeed, said query string $\sdone^m$ takes $m$ 
steps for $\Machine^?$ to compile, it follows $m\leq t(n)$.
And $\IN\ni m\mapsto 2^m\in\IR$ followed by real multiplication
with $x_j\in[0;1]$ costs $\calO(m)+\calO(m\cdot\log m)$ according 
to Definition~\ref{d:Contribution}; and $\Round()$ costs
$\calO(m^2)$ when implemented according to Example~\ref{x:BRound}
instead of Example~\ref{x:URound}.
This shows how to calculate within \ERC, given $n\in\IN$,
some $z_n\in\IZ$ with $|f(\vec x)-z/2^n|\leq2^{-n}$.
`Shadowing' the integer operations leading to said $z_n$
with real operations simultaneously yields $r_n\in\IR$
with $r_n=z_n$, at same asymptotic cost. 
And generating and multiplying with $2^{-n}\in\IR$
to obtain $y_n:=r_n/2^n\in\IR$ is also within budget.
Finally, \ERC's limit functional (Example~\ref{x:Limit})
`converts' the
real sequence $y_n$ of fast Cauchy approximations 
to $\lim_n y_n=f(\vec x)$, at cost 
$\poly\big(t(\calO(n))\big)$ 
according to Definition~\ref{d:Contribution}.
\qed\end{proof}

\section{Moduli of Continuity of \ERC Programs} 
\label{s:ERC2}

Definition~\ref{d:Contribution} had assigned local bit-costs
to the basic operations comprising \ERC:
depending on both length and working precision parameters $n$ and $m$,
whose values and evolution throughout a computation remains 
a (usually human) challenge to determine or bound;
recall Remark~\ref{r:Local}.
For the length parameter $n$, said evolution is based on a straight-forward induction
on the operations performed, starting with the program's arguments.
In \ERC the evolution of $m$ proceeds 
backwards: from desired output precision $p$
via intermediate working precisions
sufficient for real sign tests;
recall Subsection~\ref{ss:ERC}b+c).
Subsections~\ref{ss:Modulus2} and \ref{ss:Choose} above
have already introduced moduli of continuity for 
the primitives comprising \ERC; 
and the present section combines them 
to moduli of entire \ERC programs:
formalized first as Straight-Line Programs
(Subsection~\ref{ss:SLP} without branching,
then as Computation Trees (Subsection~\ref{ss:CT})
with branching, and finally as Computation Forests
(Subsection~\ref{ss:CF})
\ie collections of Computation Trees 
which may call each other 
including possible recursion,
and culminating in Subsection~\ref{ss:Simul2}
about the efficient simulation of \ERC on Turing machines.

\subsection{Straight-Line Programs}
\label{ss:SLP}

In order to formally assign moduli to real algorithms,
recall {\rm\cite[\S4.1]{ACT97}} the \emph{Straight-Line Program} model of computation,
here over a collection $\calF$ of (possibly partial) finitary operations $f$ on some many-sorted structure $\calS$.
Such a program $\calL$ (of arity $d$ and length $L$, say) produces, upon
input of $x_{-d},\ldots,x_{-1}\in\calS$, an $L$-element sequence $y_1,\ldots,y_L\in\calS$ as follows:
Each $y_\ell$ is the result of applying $f_\ell\in\calF$ 
to $d_\ell$ arguments from among previous results and inputs $(x_{-d},\ldots,x_{-1},y_1,\ldots,y_{\ell-1})$,
were $d_\ell$ denotes the arity of $f_\ell$. 
Formally write
$\pi_\ell:\{1,\ldots,d_\ell\}\to\{-d,\ldots,-1,+1,\ldots,\ell-1\}$ for
said `multiplexer' assignment reversed, 
namely from among the previous results 
(possibly repeated, aka \emph{fan-out}) 
to arguments of $f_\ell$, as mapping acting on tuple indices:
$y_\ell=f_\ell\circ\pi_\ell^{-1}(x_{-d},\ldots,x_{-1},y_1,\ldots,y_{\ell-1})$;
and $\calL=(f_1\circ\pi_1^{-1},\ldots,f_L\circ\pi_L^{-1})$ for the thus composed straight-line program (SLP).
Moreover abbreviate $\calL_{\ell}=(f_1\circ\pi_1^{-1},\ldots,f_\ell\circ\pi_\ell^{-1})$ for the initial segment of $\calL$,
where $\calL_{-j}$ means the SLP that merely re-produces input $x_{-j}$ ($1\leq j\leq d$).
With list concatenation notation 
$\calL=\calL_{L-1} \concat \big(f_L\circ\pi_L^{-1}\big)$,
the \emph{semantics} $\tilde\calL(\vec x)=(\vec x,\vec y)$ of $\calL$ on input $\vec x$
can be expressed inductively as 
\begin{equation}
\label{e:SLP}
\tilde\calL(\vec x) \;\;=\;\; 
\widetilde{\calL_{L-1}}(\vec x) \;\concat\:
\Big(f_{L}\circ\pi^{-1}_{L}\big(\widetilde{\calL_{L-1}}(\vec x)\big)\Big)
\enspace .
\end{equation}
Note that $\tilde\calL(\vec x)$ may contain undefined entries since the $f_\ell$ are generally partial:
We write $\dom(\calL)\subseteq\calS^d$ for the set of inputs $\vec x$ making each component $y_\ell$ of $\tilde\calL(\vec x)$ defined.

\begin{definition}[Modulus of a Straight-Line Program]
\label{d:SLP}
Let $\calL=(f_1\circ\pi_1^{-1},\ldots,f_L\circ\pi_L^{-1})$ be a SLP of arity $d$ on $\calS$ over $\calF$.
Suppose that $\calS$ is endowed with a metric 
and that $\calF$ consists of continuous operations only,
so that each $f\in\calF$ has a unique modulus $\mu_f$ of pointwise continuity 
according to Remark~\ref{r:Composition}.
Then assign to (the last result $y_L$ of) $\calL$ a \emph{pointwise modulus} 
$\mu_{\calL}:\dom(\calL)\times\uobarZ\nearrow\uobarZ^d$ by induction:
\begin{gather}
\mu_{\calL_{-j}}(\vec x,p) \;:=\; (-\infty,\ldots,-\infty,\overset{j\text{-th}}{p},-\infty,\ldots,-\infty), \quad j=1,\ldots,d 
\nonumber \\ \label{e:SLP2}
\mu_{(\calL_{L-1}\concat f_{L}\circ\pi^{-1}_{L})}(\vec x,p) \;:=\;
\max\nolimits_{k=1}^{d_L} \mu_{\calL_{\pi(k)}}\big(\vec x,\mu_{f_L,k}\big(\calL_{\pi(k)}(\vec x),p\big)\big)
\end{gather}
componentwise.
\end{definition}
Note that $\calL_{\pi(k)}$ may calculate, in addition
to $y_k$, intermediate results $y_\ell$ that $f_L\circ\pi_L^{-1}$
does not depend on: 
these `contribute' $-\infty$ to the maximum
in Equation~\eqref{e:SLP2};
recall Definition~\ref{d:Modulus}a).
We record that, by induction on Estimate~\eqref{e:Composition2},
$\mu_{\calL}$ is a modulus of local continuity of $\mu_{\tilde\calL}$.

\begin{example}
\label{x:SLP}
Even when $\mu_{\tilde\calL}$ and all $\mu_f$ are `optimal' moduli of continuity,
$\mu_{\calL}$ may be unnecessarily large by far 
and thus fail the second part of Problem~\ref{p:Stability2}:
Consider SLPs like $x-x+x-x+x-x\ldots$ and recall Remark~\ref{r:OverEstimate}.
\end{example}
Straight-line programs cannot branch, 
hence the subsequent generalization:

\subsection{Computation Trees}
\label{ss:CT}

Recall the concept of a \emph{Computation Tree} {\rm\cite[\S4.4]{ACT97}},
here over operations $\calF$ and (generalized, see Definition~\ref{d:MultiPred}) predicates $\calP$ on $\calS$:
\vspace{-\topsep}%
\begin{itemize}
\item
nodes of out-degree 1 called \emph{computation} nodes,
\item
nodes of out-degree $b\geq2$ called \emph{branching} nodes,
\item
leaves (out-degree 0) called \emph{result} nodes,
\item together with $d$ designated \emph{input} nodes 
$\imath_{-d},\ldots,\imath_{-1}$.
\end{itemize}
Each computation node and each result node $u$ in such a tree $\calT$ 
is labelled with some $d_u$-ary function $f_u\in\calF$, 
receiving arguments specified from among its predecessor computation or input nodes;
and each $b$-fold branching node $v$ in $\calT$ is labelled with some 
(possibly generalized $b$-fold) $d_v$-ary predicate $P_v\in\calP$, 
also receiving arguments specified from among its predecessor computation or input nodes:
see Figure~\ref{f:linct} for an example.
Intuitively an input $\vec x\in\calS^d$ 
starts by assigning values to $\imath_{-d},\ldots,\imath_{-1}$
and then proceeds through $\calT$, 
resulting in both computation and branching nodes being evaluated (if defined)
and, for the latter, continuing to the left/right/$\tau$-th subtree---depending 
on whether its generalized predicate label evaluates to \false/\true/$\tau$:
the `control flow' may be both partial and non-deterministic!

\begin{figure}[htb]\centerline{%
\includegraphics[width=0.8\textwidth]{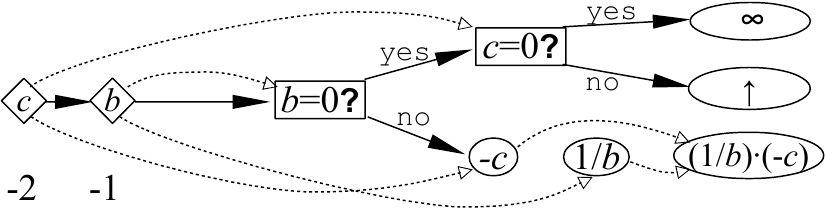}}
\caption{\label{f:linct}Toy example of a (total, binary) Computation Tree $\calT$
solving the linear equation $b\cdot x+c=0$: \newline
control flow is indicated with solid lines, 
information flow $\pi$ with dashed lines}
\end{figure}

Formally, each computation or branching or result node $w\in\calT$
is labelled with some $\omicron_w\in\calF\uplus\calP$
and additionally with some map $\pi_w:\{1,\ldots,d_w\}\to\calT'[w]$
indicating the inverse `information flow': namely
how arguments to $\omicron_w$ are fed with previous results.
Here $d_w\in\IN$ denotes the arity of $\omicron_w$, and
$\calT[w]$ denotes the set of input nodes in $\calT$ together with all of
$w$'s predecessor computation (but not branching) nodes 
including $w$ itself (even in case $w$ is a branching node);
same for $\calT'[w]=\calT[w]\setminus\{w\}$ but now excluding $w$ itself.
Analogous to SLPs, the semantics $\widetilde{\calT[u]}(\vec x)$ 
for a computation or result node $u$ on input $\vec x\in\calS^d$
(if defined, see next)
is the list of all `evaluated' computation and input (but not branching) nodes;
formally by induction:
\begin{equation}
\label{e:CT}
\widetilde{\calT[\imath_{-1}]}\;=\;\id:\calS^d\to\calS^d, \quad
\widetilde{\calT[u]} \;=\;\; \widetilde{\calT'[u]} \;\concat\; \Big(f_u\circ\pi_u^{-1}\big(\widetilde{\calT'[u]}\big)\Big)
\enspace .
\end{equation}
Similarly, 
$P_v\circ\pi_v^{-1}\big(\widetilde{\calT'[v]}\big)(\vec x)$
is the `value' of a branching node $v\in\calT$ on input $\vec x$,
provided that $\big(\widetilde{\calT'[v]}\big)(\vec x)$ is defined 
and lies in $\dom(P_v\circ\pi_v^{-1})$. Similarly, the domain
of a computation or result node $u$ according to Equation~\eqref{e:CT} is
\[ \dom(\calT[u]) \;=\;\; 
\dom(\calT'[u]) \;\cap\; 
\dom\big(f_u\circ\pi_u^{-1}(\calT'[u])\big) \enspace . \]
In case the predecessor of computation/result/branching node $w$ is a branching node $v$,
$\dom(\calT'[w])$ depends additionally on 
the `number' (=generalized truth value) $\tau$ of $w$ among $v$'s children:
\[ \dom(\calT'[w]) \;=\;\;
\big(\widetilde{\calT'[v]}\big)^{-1}\Big[ \pi_v\big[ P_v^{-1}[\tau]\big]\Big]
\enspace : \]
reflecting that only inputs making $P_v$ evaluate to $\tau$
in $v$ may proceed to $w$ in $\calT$,
possibly non-deterministically.

\begin{definition}[Modulus of a Computation Tree]
\label{d:CT}
Extending Definition~\ref{d:SLP},
suppose that all functions $f\in\calF$ and all generalized predicates $P\in\calP$ 
have associated moduli of pointwise continuity,
the latter in the sense of Definition~\ref{d:MultiPred}.
Let $\mu_w:\dom(\omicron_w)\times\uobarZ\nearrow\uobarZ^{d_w}$ 
denote said modulus associated with $\omicron_w\in\calF\uplus\calP$.
Define \emph{local} modulus 
$\mu_{\calT[w]}:\dom(\calT[w])\times\uobarZ\nearrow\uobarZ^d$ by induction,
analogous to Equations~\eqref{e:CT} and \eqref{e:SLP2}:
\begin{gather}
\mu_{\calT[\imath_{-j}]}(\vec x,p) \;:=\; (-\infty,\ldots,-\infty,\overset{j\text{-th}}{p},-\infty,\ldots,-\infty), \quad j=1,\ldots,d 
\nonumber \\ \label{e:CT2}
\mu_{\calT[w]}(\vec x,p) \;:=\;
\max\nolimits_{k=1}^{d_w} \mu_{\calT[\pi_w(k)]}
\Big(\vec x,\mu_{w,k}\big(\pi_w^{-1}\circ
\widetilde{\calT'[w]}
(\vec x)\:,\,p\big)\Big)
\enspace ,  \\ \nonumber 
\mu_{\calT}(\vec x,p) \;:=\; \max\big\{ \calT[u](\vec x,p) \::\:
u\text{ result node in $\calT$, }  \vec x\in\dom(\calT[u]) \big\} 
\end{gather}
componentwise. 
\end{definition}
Similarly to the case of SLPs, we record that
$\mu_{\calT[u]}$ is a pointwise modulus of continuity of $\widetilde{\calT[u]}$,
provided that $\mu_\omicron$ are moduli of pointwise continuity of $\omicron\in\calF\uplus\calP$.
Again, $\mu_{\calT[u]}$ may be arbitrarily larger than a least modulus of $\widetilde{\calT[u]}$,
if it exists; recall Example~\ref{x:SLP}.

Computation Trees do not natively support loops;
these can be `unrolled', if the number of iterations is known: 
compare Subsection~\ref{ss:Round}.
A popular exercise asks to
express Gaussian Elimination (Example~\ref{x:Determinant}) 
as computation Tree \cite[Example~4.23]{ACT97}.
Loops with indeterminate number of iterations 
can be expressed via recursive function calls.
Classical Computation Trees do not support 
calling other Computation Trees as `user functions', though:

\subsection{Computation Forests and \ERC}
\label{ss:CF}

A \emph{Computation Forest} $\frakF$ is a family of Computation Trees $\calT$
over the same set of basic operations $\calF$ and predicates $\calP$ on the joint structure $\calS$:
with the extension that a computation or result node in such $\calT\in\frakF$ may now, 
alternative to employing some operation $f\in\calF$, invoke some Computation Tree $\calT'\in\frakF$---possibly 
recursively and in particular including $\calT$ itself.
But differently, a computation forest $\frakF$ expands $\calF$ to $\calF\uplus\frakF$: syntactically.
The partial semantics $\widetilde{\calT[u]}(\vec x)$
of node $u$ in Computation Tree $\calT\in\frakF$ is again defined inductively by Equation~\eqref{e:CT},
and as least fixed point of the recursion operator \cite[\S6]{DenotationalSemantics};
similarly for the local modulus of continuity 
$\mu_{\calT}(\vec x,p)$ according to Equation~\eqref{e:CT2}.
Now tailor Computation Forests to the particular case of \ERC,
where cost contributes essentially in three ways:
from the number $N$ of basic operations/predicates executed,
from the magnitude $n$ of the argument/s to each such operation,
and from the working precision $m$ (=0 in the discrete case).

\begin{definition}
\label{d:ERC}
Let $\calS$ denote the following three-sorted structure:
Kleeneans $\IK=\{\false,\true,\unknown\}$; 
Presburger Integers $\big(\IZ,0_\IZ,1_\IZ,+_\IZ,-_\IZ,>,\div_2\big)$;
and the Reals as ordered field $\big(\IR,0,1,+,-,\times,\div,\sign\big)$;
with embedding $\error:\IZ\ni z\mapsto2^z\in\IR$, where 
the \emph{partial} Boolean real sign test $\sign:\IR\setminus\{0\}\to\{\true,\false\}$ 
doubles as total \emph{Kleenean} real sign test $\sign:\IR\to\{\true,\false,\unknown\}$.

Formally, the intra-sort operations comprising $\calF$ are 
\[ \{\true,\false,\vee,\wedge,\neg\} \;\uplus\; 
\{0_\IZ,1_\IZ,+_\IZ,-_\IZ,\div_2\} \;\uplus\;
\{0,1,+,-,\times,\div\}  \]
while $\error$ and $>$ and $\sign()$ and $\mychoose()$ 
connect different sorts. 
Here integer predicate $>$ and partial real $\sign()\big|_{\IR\setminus\{0\}}$ are collected in $\calP$; 
while the total Kleenean variant $\sign()$ is only permitted (in expressions
passed) as arguments to $\mychoose()$, thus capturing lazy evaluation.
\begin{enumerate}
\item[a)]
An \emph{\ERC Computation Forest} is syntactically a Computation Forest $\frakF$
over said $\calF$ and $\calP$ on $\calS$ such that semantically, for
each $\calT$ in $\frakF$ it holds: 
either all result nodes of $\calT$ are of sort $\IZ$ 
or all result nodes of $\calT$ are of sort $\IR$.
In the latter case, $\calT$ has $d+1\geq1$ input nodes,
and its last input node is of sort $\IZ$ 
and called \emph{output precision parameter} $p$;
moreover the return values $y=y(\vec x,p)$ of
$\calT$ on input $(\vec x,p)$,
given by $f_u\circ\pi^{-1}\big(\widetilde{\calT'[u]}\big)$
in result node $u$ according to Equation~\eqref{e:CT},
are supposed to satisfy 
$|y(\vec x,p)-y(\vec x,p')|\leq 2^{-p}+2^{-p'}$
whenever defined;
and `invoking' $\calT$ with the first $d$ inputs $\vec x$ 
but without said last input $p$
is understood to return
$\lim_{p\to\infty} y(\vec x,p)$.
\item[b\,i)]
Let $N=N_\calT(\vec x,p)\in\IN$
count the number of basic operations $f\in\calF$ 
and predicates $P\in\calP$
executed when invoking $\calT$ with input $(\vec x,p)$,
where such a call to $\calT'\in\frakF$ with parameters
$(\vec x',p')$ in turn adds $1+N_{\calT'}(\vec x',p')$
to $N_\calT(\vec x,p)$;
$N_\calT(\vec x,p)=\infty$ in case of infinite recursion,
or when $n_\calT(\vec x,p)=\infty$ or $m_\calT(\vec x,p)=\infty$
as defined next.
\item[b\,ii)]
Let $n=n_\calT(\vec x,p)\in\IN$
denote the largest magnitude $\myn(y)$
of any intermediate value $y$ arising 
during the computation of $\calT$ on input $(\vec x,p)$,
including when subsequently (and again possibly recursively) invoking of some $\calT'\in\frakF$;
$n_\calT(\vec x,p)=\infty$ in case of division by zero.
\item[b\,iii)]
Let $m=\mu_\calT(\vec x,p)\in\IN\cup\{\infty\}$ 
denote the initial/working precision 
according to Equation~\eqref{e:CT2},
sufficient for all real sign tests to succeed (including
when possibly recursively invoking some $\calT'\in\frakF$)
and for the result to have guaranteed error $\leq2^{-p}$;
$m:=\infty$ in case a real sign amounts to 0.
\item[c)]
We say that $\calT(\vec x,p)$ incurs \emph{polynomial cost}
on domain $D\subseteq\dom(\calT)$
provided that,
for all $p\in\IN$ and all $\vec x=(x_1,\ldots,x_d)\in D$,
all three $N_\calT(\vec x,p)$ and 
$n_\calT(\vec x,p)$ and $\mu_\calT(\vec x,p)$ according to (b)
are bounded by some polynomial in $p+\myn(\vec x)$.
\end{enumerate}
\end{definition}
Note that \ERC Computation Forests do not support---or need---neither variables nor loops:
the latter can be expressed by means of recursive calls among Computation Trees.
Subsection~\ref{ss:Round} above and Appendix~\ref{a:Steep} below shows and analyzes 
further example \ERC programs, including calls to others.
In the spirit of Remark~\ref{r:Local}c),
Definition~\ref{d:ERC}c) only distinguishes super/polynomial cost in \ERC:
future work may want to refine, for instance the degree of said polynomial cost,
see Question~\ref{q:Question} below. 

\subsection{Simulating \ERC on a Turing Machine}
\label{ss:Simul2}

Based on Definition~\ref{d:ERC}, we can now establish the converse to Theorem 24:

\begin{theorem}
\label{t:Simul2}
Suppose that $f:\subseteq\IR^d\to\IR$ on domain $\dom(f)\subseteq\IR^d$ is realized by
an ERC Computation Tree $\calT$, as part of forest $\frakF$, with polynomial cost. 
Then $f$ can be computed by an oracle Turing machine in polynomial time.
\end{theorem}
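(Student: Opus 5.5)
We prove Theorem~\ref{t:Simul2} by having the oracle Turing machine $\Machine^?$ simulate the execution of $\calT$ on input $(\vec x,p)$ in its user-intended imperative order. Each real value is replaced by a dyadic approximation of fixed precision $m:=\mu_\calT(\vec x,p)+c$, and integers and Kleeneans are handled exactly, as in the classical simulation of integer register machines with bit-cost. The difficulty is that $m$ depends on $\vec x$ and is not known in advance. We therefore use the \emph{bottom-up} strategy of Subsection~\ref{ss:Stability2}: run the simulation with candidate precisions $m=p,2p,4p,\ldots$ and accept the first run in which every real sign test and every $\mychoose()$ is certified by its approximation. By Definition~\ref{d:ERC}c) there is a polynomial $q$ with $\mu_\calT(\vec x,p),\,n_\calT(\vec x,p),\,N_\calT(\vec x,p)\leq q(p+\myn(\vec x))$. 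Hence after $\calO(\log q)$ doublings the candidate exceeds the true working precision. Any earlier run is aborted once it exceeds the time budget or meets an undecidable sign, which keeps the total time polynomial. Note that $\myn(\vec x)$ can itself be read off from $\Phi_j(\sdone^0)$.

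The key invariant, proved by induction along the execution path (Equation~\eqref{e:CT2}, and through nested calls in the forest $\frakF$), is the following. Suppose each input $x_j$ is queried with precision $\mu_{\calT,j}(\vec x,p)$ via the oracle $\Phi_j$. Then each intermediate node $w$ is computed with absolute error at most $2^{-m_w}$, where $m_w$ is the precision demanded of $w$ by the backward composition of moduli. This is where the moduli from Definition~\ref{d:Modulus}c) of $\pm,\times,\div$ come in, together with Remark~\ref{r:FPCost}g) for inversion. Each approximate operation is carried out on fixed-point numbers of length $n_w+m_w+\calO(1)\leq\poly$. By Figure~\ref{f:FPCost} it therefore costs polynomial time, and there are at most $N_\calT$ of them.

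For tests, the modulus $\mu_{\sign}$ guarantees that an approximation with error at most $2^{-(1-n)}$, i.e.\ at most $|y|/2$, decides $\sign(y)$. For $\mychoose(\tau_0,\ldots,\tau_{K-1})$, Example~\ref{x:Choose} guarantees that some argument $\tau_k$ with $\tau_k=\true$ is certified at the available precision. The machine evaluates all $K$ lazy Kleenean arguments at the current precision, a constant number of times. It then picks any certified $\true$ argument, which is legitimate by the multivalued semantics. Because the generalized-predicate moduli of Definition~\ref{d:MultiPred} only require that the chosen $\tau$ remain admissible for all nearby perturbed inputs, the branch the machine takes is one that $\calT$ may legally take on the exact input $\vec x$. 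The output bound $|y(\vec x,p)-\tilde y|\leq2^{-p}$ then follows from the invariant at the result node. A further call to $\calT$ at parameter $p+2$, followed by rounding, yields the required dyadic $z_p/2^p$.

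Nested calls and implicit limits are handled recursively. A call to $\calT'\in\frakF$ without a precision argument, which means $\lim_{p'}$, is simulated by invoking the simulation of $\calT'$ with $p':=m_w+1$, exactly as in the modulus of Example~\ref{x:Limit}. Since $N_\calT$, which counts recursive calls, is polynomially bounded, the recursion depth and the total work stay polynomial. The recursion stack is kept on a work tape; the absence of arrays (Subsection~\ref{ss:ERC}g) keeps each frame polynomially small.

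I expect the main obstacle to be the justification of the invariant through branches. When the approximate computation takes a different branch than the exact one would deterministically take, we must argue via Definition~\ref{d:CT} that the simulated run is still a valid run of $\calT$ on some input in $\dom(\calT)$, and that $\mu_\calT$ evaluated on that run still bounds what is needed. A related issue is that the bottom-up search must certify termination without knowing $\mu_\calT(\vec x,p)$. I would first try to resolve both by taking the margin $c$ from Remark~\ref{r:Modulus}b), which transfers the pointwise modulus from $\vec x$ to its nearby approximations.
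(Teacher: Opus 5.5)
Your overall plan is the paper's: simulate $\calT(\vec x,p)$ in its imperative order and combine the three polynomial bounds of Definition~\ref{d:ERC}c). The paper's proof consists of nothing more than that combination. The gap is in how you fix the working precision. Your invariant uses per-node precisions $m_w$ obtained by \emph{backward} composition. These are determined by the later consumers of $w$ and by branches not yet taken, so a forward simulation cannot know them when it computes $w$. It needs one working precision $M\geq\max_w m_w$. It also needs guard bits for the rounding error that every simulated operation adds and that the moduli of the exact operations do not account for; demanding one extra bit per node costs at most $N_\calT$ further bits.

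Moreover, $\max_w m_w$ is not $\mu_\calT(\vec x,p)$, because Equation~\eqref{e:CT2} only records what is demanded at the \emph{input} nodes. For example, take $u:=x\cdot\error(-p)$ with $x\in[\tfrac12;1]$. A sign test on $u\cdot u$ demands at most about $p$ bits of $x$ but about $2p$ bits of $u\cdot u$. So your uniform $m:=\mu_\calT(\vec x,p)+c$ fails for $p\gg c$. The paper does not derive a bound on $M$ from the input modulus. It reads Definition~\ref{d:ERC}b\,iii) as granting a polynomial working precision sufficient for \emph{all} operations and tests of the run. Your claim $n_w+m_w\leq\poly$ silently needs the same reading.

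With that reading the bottom-up search is superfluous: $q$ is given, so the machine can fix $M:=q(p+\myn(\vec x))$ plus guard bits up front. As you specify it, the search is also unsound. It accepts the first run whose tests are all certified, so a test-free $\calT$ returning $x+x$ is accepted at $m=p$ with output error up to $2^{1-p}$.

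Conversely, the obstacle you single out at the end is not one, and Remark~\ref{r:Modulus}b) is not needed. As you observed yourself, the machine only follows certified branches. A certified branch is always one that $\calT$ may take on the \emph{exact} input $\vec x$: for a partial $\sign$ it is the exact branch, and for $\mychoose()$ any \true argument is legal. Hence the simulated run is a legal run on $\vec x$ itself, not on a perturbed input. In Definition~\ref{d:CT}, $\mu_\calT(\vec x,p)$ is a maximum over all result nodes $u$ with $\vec x\in\dom(\calT[u])$, i.e.\ over all legal runs. Definition~\ref{d:ERC}b\,iii) stipulates that it suffices for every sign test. So the bound covers whichever run the machine takes, and with $M$ fixed a priori there is no termination question either. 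With these repairs your proposal becomes a detailed version of the paper's proof.
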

\begin{proof}[Theorem~\ref{t:Simul2}]
According to Definition~\ref{d:ERC}b\,iii+c), working precision $m$ 
polynomial in $n+p$ suffices to simulate all operations performed by $\calT(\vec x,p)$: 
including real sign tests and calls invoking some $\calT'\in\frakF$. 
And according to Definition~\ref{d:ERC}b\,ii+c), all intermediate
results have length $n'$ at most polynomial in $n+p$. 
Finally, according to Definition~\ref{d:ERC}b\,i+c),
only polynomially many basic operations get executed: 
each one incurring cost at most polynomial in $n+p$.
\qed\end{proof}
\noindent
Theorem~\ref{t:Simul1} basically says that our abstract ERC cost assignments are not unrealistically
large, compared to a Turing machine; and Theorem~\ref{t:Simul2} says that said cost assignments
are not too small, either. Computational experiments in Appendix~\ref{a:Experiments}
further confirm their agreement with practice. They in fact suggest that the polynomial
equivalence according to Theorems~\ref{t:Simul1} and \ref{t:Simul2} seems to hold on a finer level:

\begin{question}
\label{q:Question}
Can the generic polynomial relation established in Theorems~\ref{t:Simul1} and \ref{t:Simul2}
be tightened, such as to equivalence up to quadratic cost?
\end{question}

\subsection*{Acknowledgements}
\addcontentsline{toc}{subsection}{Acknowledgements}
Holger Thies is supported by JSPS KAKENHI Grant Numbers JP20K19744 and JP23K28036.
Martin Ziegler is supported by KAIST in-house grant KC30.
We thank anonymous reviewers.

\addcontentsline{toc}{section}{References}
\bibliographystyle{plain}
\bibliography{cca,erccost}

@article{BH98,
   author = {Brattka, Vasco and Hertling, Peter},
   title = {Feasible real random access machines},
   journal = {Journal of Complexity},
   volume = {14},
   number = {4},
   year = {1998},
   pages = {490--526},
}

@inproceedings{GL01,
   author = {Gowland, Paul and Lester, David},
   title = {A Survey of Exact Arithmetic Implementations},
   editor = {Blanck, Jens and Brattka, Vasco and Hertling, Peter},
   booktitle = {Computability and Complexity in Analysis},
   series = {Lecture Notes in Computer Science},
   volume = {2064},
   publisher = {Springer},
   address = {Berlin},
   year = {2001},
   pages = {30--47},
   note = {4th International Workshop, CCA 2000, Swansea, UK, September 2000},
}

@article{Grz57,
   author = {Grzegorczyk, Andrzej},
   title = {On the definitions of computable real continuous functions},
   journal = {Fundamenta Mathematicae},
   volume = {44},
   year = {1957},
   pages = {61--71},
}

@book{Ko91,
   author = {Ko, Ker-I},
   title = {Complexity Theory of Real Functions},
   series = {Progress in Theoretical Computer Science},
   publisher = {Birkh\"auser},
   address = {Boston},
   year = {1991},
}

@book{Koh08a,
   author = {Kohlenbach, Ulrich},
   title = {Applied Proof Theory: Proof Interpretations and their Use in Mathematics},
   publisher = {Springer},
   address = {Berlin},
   year = {2008},
}

@article{Lam07,
   author = {Lambov, Branimir},
   title = {{RealLib}: {A}n efficient implementation of exact real arithmetic},
   journal = {Mathematical Structures in Computer Science},
   volume = {17},
   year = {2007},
   pages = {81--98},
}

@article{Luc77,
   author = {Luckhardt, Horst},
   title = {A fundamental effect in computations on real numbers},
   journal = {Theoretical Computer Science},
   volume = {5},
   number = {3},
   year = {1977},
   pages = {321--324},
}

@article{Men05,
   author = {M{\'e}nissier-Morain, Val{\'e}rie},
   title = {Arbitrary precision real arithmetic: design and algorithms},
   journal = {The Journal of Logic and Algebraic Programming},
   volume = {64},
   year = {2005},
   pages = {13--39},
}

@inproceedings{Mue01,
   author = {M{\"u}ller, Norbert Th.},
   title = {The {iRRAM}: Exact Arithmetic in {C}++},
   editor = {Blanck, Jens and Brattka, Vasco and Hertling, Peter},
   booktitle = {Computability and Complexity in Analysis},
   series = {Lecture Notes in Computer Science},
   volume = {2064},
   publisher = {Springer},
   address = {Berlin},
   year = {2001},
   pages = {222--252},
   note = {4th International Workshop, CCA 2000, Swansea, UK, September 2000},
}

@inproceedings{Mue86a,
   author = {M{\"u}ller, Norbert Th.},
   title = {Subpolynomial complexity classes of real functions and real numbers},
   editor = {Kott, Laurent},
   booktitle = {Proceedings of the 13th International Colloquium on Automata, Languages, and Programming},
   series = {Lecture Notes in Computer Science},
   volume = {226},
   publisher = {Springer},
   address = {Berlin},
   year = {1986},
   pages = {284--293},
}

@article{PZ13,
   author = {Pauly, Arno and Ziegler, Martin},
   title = {Relative computability and uniform continuity of relations},
   journal = {Journal of Logic and Analysis},
   volume = {5},
   number = {7},
   year = {2013},
   pages = {1--39},
}

@book{Wei00,
   author = {Weihrauch, Klaus},
   title = {Computable Analysis},
   publisher = {Springer},
   address = {Berlin},
   year = {2000},
}

@article{Wei03,
   author = {Weihrauch, Klaus},
   title = {Computational complexity on computable metric spaces},
   journal = {Mathematical Logic Quarterly},
   volume = {49},
   number = {1},
   year = {2003},
   pages = {3--21},
}

@misc{NuclearGandhi,
  title = "Nuclear {G}handi",
  author = "{W}ikipedia{,} The Free Encyclopedia",
  year = "accessed August 2026",
  note = "http://en.wikipedia.org/wiki/Nuclear\_Gandhi",
}

@book{Davis1965,
	address = {Hewlett, NY, USA},
	editor = {Martin Davis},
	publisher = {Dover Publication},
	title = {The Undecidable: Basic Papers on Undecidable Propositions, Unsolvable Problems and Computable Functions},
	year = {1965}
}

@book{MCA,
place={Cambridge}, 
edition={3}, 
title={Modern Computer Algebra}, 
publisher={Cambridge University Press}, 
author={von zur Gathen, Joachim and Gerhard, Jürgen}, 
year={2013}}

@article{IntegerMultiplication,
author = {David Harvey and Joris van der Hoeven},
title = {Integer multiplication in time {O(n log n)}}, 
pages = {563--617},
volume = {193},
year = {2021},
journal = {Annals of Mathematics},
publisher = {Princeton},
}

@book{Dietzfelbinger,
author = {Dietzfelbinger, Martin},
year = {2004},
title = {Primality testing in polynomial time},
series = {LNCS},
volume = {3000},
publisher = {Springer},
isbn = {3-540-40344-2},
}

@techreport{KaratsubaSqrt,
author = {Paul Zimmermann},
title = {Karatsuba Square Root},
year = {1999},
url = {https://inria.hal.science/inria-00072854},
TYPE = {Research Report},
  NUMBER = {RR-3805},
  PAGES = {8},
  INSTITUTION = {{INRIA}},
}

@book{MullerElementary,
author       = {Jean{-}Michel Muller},
  title        = {Elementary functions: Algorithms and implementation},
  publisher    = {Birkh{\"{a}}user},
  year         = {1997},
  isbn         = {081763990X},
}

@Misc{Lester01,
author = {David Lester},
title = {Infinite Precision Basic Linear Algebra Systems or {Why Norbert M\"{u}ller is Right}},
year = {2001},
month = {November},
note = {http://cca-net.de/cca2001},
howpublished = {Dagstuhl 01461},
}

@article{Borwein2,
author = {Borwein, J. M. and Borwein, P. B.},
title = {On the Complexity of Familiar Functions and Numbers},
journal = {SIAM Review},
volume = {30},
number = {4},
pages = {589-601},
year = {1988},
doi = {10.1137/1030134},
}

@article{MPFR,
author = {Fousse, Laurent and Hanrot, Guillaume and Lef\`{e}vre, Vincent and P\'{e}lissier, Patrick and Zimmermann, Paul},
title = {{MPFR}: A multiple-precision binary floating-point library with correct rounding},
year = {2007},
issue_date = {June 2007},
publisher = {Association for Computing Machinery},
address = {New York, NY, USA},
volume = {33},
number = {2},
issn = {0098-3500},
doi = {10.1145/1236463.1236468},
journal = {ACM Trans. Math. Softw.},
month = jun,
pages = {13–es},
numpages = {15},
}

@article{ARIADNE,
title = {Rigorous Function Calculi in {A}riadne},
author = {Pieter Collins and Luca Geretti and Sanja Zivanovic Gonzalez and Davide Bresolin and Tiziano Villa},
doi = {10.48550/arXiv.2306.17541},
journal = {Logical Methods in Computer Science}, 
volume = {21}, 
issue = {3},
year = {2025},
}

@article{PBC+24b,
   author = {Park, Sewon and Brau{\ss}e, Franz and Collins, Pieter and Kim, SunYoung and Kone{\v{c}}n{\'{y}}, Michal and Lee, Gyesik and M{\"u}ller, Norbert and Neumann, Eike and Preining, Norbert and Ziegler, Martin},
   title = {Semantics, Specification Logic, and {H}oare Logic of Exact Real Computation},
   journal = {LMCS},
   volume = {20},
   number = {2},
   year = {2024},
   month = Jun,
   doi = {10.46298/lmcs-20(2:17)2024},
}

@book{MATLAB,
year = {2025},
author = {MATLAB},
title = {},
publisher = {The MathWorks Inc.},
address = {Natick, Massachusetts, United States},
url = {https://www.mathworks.com}
}

@book{TPbook,
author = {Arnold Sch\"{o}nhage and Andreas F. Grotefeld and Ekkehart Vetter},
title = {Fast Algorithms. A Multitape Turing Machine Implementation},
isbn = {3411168919},
publisher = {BI Wissenschaftsverlag},
year = {1994},
url = {https://pages.iai.uni-bonn.de/schoenhage_arnold/tp/TPbook.html},
}

@article{Lago22,
  author       = {Ugo Dal Lago},
  title        = {Implicit computation complexity in higher-order programming languages: {A} Survey in Memory of Martin Hofmann},
  journal      = {Math. Struct. Comput. Sci.},
  volume       = {32},
  number       = {6},
  pages        = {760--776},
  year         = {2022},
  doi          = {10.1017/S0960129521000505},
}

@book{Minsky67,
author = {Minsky, Marvin L.},
title = {Computation: finite and infinite machines},
year = {1967},
isbn = {0131655639},
publisher = {Prentice-Hall},
address = {USA},
}

@book{Hypercomputation,
author = {Syropoulos, Apostolos},
title = {Hypercomputation: Computing Beyond the Church-Turing Barrier (Monographs in Computer Science)},
year = {2007},
isbn = {0387308865},
publisher = {Springer},
address = {Berlin, Heidelberg}
}

@book{Automata,
author = {Hopcroft, John E. and Motwani, Rajeev and Ullman, Jeffrey D.},
title = {Introduction to Automata Theory, Languages, and Computation (3rd Edition)},
year = {2006},
isbn = {0321455363},
publisher = {Addison-Wesley Longman Publishing Co., Inc.},
address = {USA}
}

@book{BCSS98,
  title={Complexity and Real Computation},
  author={Lenore Blum and Felipe Cucker and Michael Shub and Steve Smale},
  year={1998},
  publisher={Springer},
  isbn={978-0-387-98281-6}
}

@article{Blum67,
  author       = {Manuel Blum},
  title        = {A Machine-Independent Theory of the Complexity of Recursive Functions},
  journal      = {J. {ACM}},
  volume       = {14},
  number       = {2},
  pages        = {322--336},
  year         = {1967},
  doi          = {10.1145/321386.321395},
}

@incollection{Boas90,
  author       = {Peter van Emde Boas},
  editor       = {Jan van Leeuwen},
  title        = {Machine Models and Simulation},
  booktitle    = {Handbook of Theoretical Computer Science, Volume {A}},
  chapter      = {Algorithms and Complexity},
  pages        = {1--66},
  publisher    = {Elsevier and {MIT} Press},
  year         = {1990},
}

@article{Yao03,
  author       = {Andrew Chi{-}Chih Yao},
  title        = {Classical physics and the Church-Turing Thesis},
  journal      = {J. {ACM}},
  volume       = {50},
  number       = {1},
  pages        = {100--105},
  year         = {2003},
  doi          = {10.1145/602382.602411},
}

@article{Deutsch85,
    author = {Deutsch, David},
    title = {Quantum theory, the Church–Turing principle and the universal quantum computer},
    journal = {Proceedings of the Royal Society of London. A. Mathematical and Physical Sciences},
    volume = {400},
    number = {1818},
    pages = {97-117},
    year = {1985},
    month = {07},
    issn = {0080-4630},
    doi = {10.1098/rspa.1985.0070},
}

@article{Ziegler09,
  author       = {Martin Ziegler},
  title        = {Physically-relativized Church-Turing Hypotheses: Physical foundations of computing and complexity theory of computational physics},
  journal      = {Appl. Math. Comput.},
  volume       = {215},
  number       = {4},
  pages        = {1431--1447},
  year         = {2009},
  doi          = {10.1016/J.AMC.2009.04.062},
}

@article{BeggsTucker14,
author = {Edwin J. Beggs and Jos\'{e} F\'{e}lix Costa and Diogo Po\c{c}as and John V. Tucker},
title = {An Analogue-Digital {Church}-{Turing} Thesis},
journal = {International Journal of Foundations of Computer Science},
volume = {25},
number = {04},
pages = {373-389},
year = {2014},
doi = {10.1142/S0129054114400012},
}

@book{ComputationalGeometry,
  author       = {Mark de Berg and
                  Otfried Cheong and
                  Marc J. van Kreveld and
                  Mark H. Overmars},
  title        = {Computational geometry: algorithms and applications, 3rd Edition},
  publisher    = {Springer},
  year         = {2008},
  doi          = {10.1007/978-3-540-77974-2},
  isbn         = {9783540779735},
}

@inproceedings{Sewon23,
  author       = {Sewon Park},
  editor       = {Henning Fernau and
                  Klaus Jansen},
  title        = {Verified Exact Real Computation with Nondeterministic Functions and
                  Limits},
  booktitle    = {Fundamentals of Computation Theory - 24th International Symposium,
                  {FCT} 2023, Trier, Germany, September 18-21, 2023, Proceedings},
  series       = {Lecture Notes in Computer Science},
  volume       = {14292},
  pages        = {363--377},
  publisher    = {Springer},
  year         = {2023},
  doi          = {10.1007/978-3-031-43587-4\_26},
}

@article{Sewon25,
  author       = {Michal Kone{\v{c}}n{\'{y}} and Sewon Park and Holger Thies},
  title        = {Extracting efficient exact real number computation from proofs in constructive type theory},
  journal      = {J. Log. Comput.},
  volume       = {35},
  number       = {6},
  year         = {2025},
  doi          = {10.1093/LOGCOM/EXAE066},
}

@article{Clerical,
  author       = {Andrej Bauer and Sewon Park and Alex Simpson},
  title        = {An Imperative Language for Verified Exact Real-Number Computation},
  journal      = {CoRR},
  volume       = {abs/2409.11946},
  year         = {2024},
  doi          = {10.48550/ARXIV.2409.11946},
  eprinttype    = {arXiv},
  eprint       = {2409.11946},
}

@article{BournezGracaPouly17,
  author       = {Olivier Bournez and Daniel Silva Gra{\c{c}}a and Amaury Pouly},
  title        = {Polynomial Time Corresponds to Solutions of Polynomial Ordinary Differential Equations of Polynomial Length},
  journal      = {J. {ACM}},
  volume       = {64},
  number       = {6},
  pages        = {38:1--38:76},
  year         = {2017},
  doi          = {10.1145/3127496},
}

@article{LimZiegler25,
  author = {Donghyun Lim and Martin Ziegler},
  title = {Quantitative Coding and Complexity Theory of Continuous Data},
  journal      = {J. {ACM}},
  volume       = {72},
  number       = {1},
  pages        = {4:1--4:39},
  year         = {2025},
  doi          = {10.1145/3705609},
}

@book{Condition,
author = {B\"{u}rgisser, Peter and Cucker, Felipe},
title = {Condition: The Geometry of Numerical Algorithms},
year = {2013},
isbn = {3642388957},
publisher = {Springer},
}

@InProceedings{Aras25,
author = {Aras Bacho and Martin Ziegler},
editor       = {Fran{\c{c}}ois Boulier and Chenqi Mou and Timur M. Sadykov and Evgenii V. Vorozhtsov},
  title        = {Second-Order Parameterizations for the Complexity Theory of Integrable Functions},
  booktitle    = {Proc. 27th {CASC}}, 
  series       = {LNCS},
  volume       = {16235},
  pages        = {27--46},
  publisher    = {Springer},
  year         = {2025},
  doi          = {10.1007/978-3-032-09645-6\_2},
}

@article{MeerZiegler08,
  author       = {Klaus Meer and
                  Martin Ziegler},
  title        = {An explicit solution to {P}ost's Problem over the reals},
  journal      = {J. Complex.},
  volume       = {24},
  number       = {1},
  pages        = {3--15},
  year         = {2008},
  doi          = {10.1016/J.JCO.2006.09.004},
}

@inproceedings{FournierKoiran98,
  author       = {Herv{\'{e}} Fournier and
                  Pascal Koiran},
  editor       = {Jeffrey Scott Vitter},
  title        = {Are Lower Bounds Easier over the Reals?},
  booktitle    = {Proceedings of the Thirtieth Annual {ACM} Symposium on the Theory
                  of Computing, Dallas, Texas, USA, May 23-26, 1998},
  pages        = {507--513},
  publisher    = {{ACM}},
  year         = {1998},
  doi          = {10.1145/276698.276864},
}

@book{ACT97,
title = {Algebraic Complexity Theory},
author = {Peter Bürgisser, Michael Clausen, Mohammad Amin Shokrollahi},
series = {Grundlehren der mathematischen Wissenschaften},
doi = {10.1007/978-3-662-03338-8},
publisher = {Springer},
year = {1997},
}

@article{ZieglerKoolen08,
title = {Kolmogorov Complexity Theory over the Reals},
journal = {Electronic Notes in Theoretical Computer Science},
volume = {221},
pages = {153-169},
year = {2008},
issn = {1571-0661},
doi = {https://doi.org/10.1016/j.entcs.2008.12.014},
author = {Martin Ziegler and Wouter M. Koolen},
}

@article{SchaeferCardinalMiltzow24,
  author       = {Marcus Schaefer and
                  Jean Cardinal and
                  Tillmann Miltzow},
  title        = {The Existential Theory of the Reals as a Complexity Class: {A} Compendium},
  journal      = {CoRR},
  volume       = {abs/2407.18006},
  year         = {2024},
  doi          = {10.48550/ARXIV.2407.18006},
  eprinttype    = {arXiv},
  eprint       = {2407.18006},
}

@article{Rice54,
author = {Henry Gordon Rice},
title = {Recursive Real Numbers},
journal = {Proc. {AMS}},
 number = {5},
 pages = {784--791},
 publisher = {American Mathematical Society},
 volume = {5},
 year = {1954},
 doi = {https://doi.org/10.2307/2031867},
}

@book{Kleene52,
author = {Stephen Cole Kleene},
  publisher = {D. van Nostrand},
  title = {Introduction to Metamathematics},
  year = 1952,
  isbn = {1258437961},
}

@inproceedings{Soft,
  author       = {Chee Yap and Michael Sagraloff and Vikram Sharma},
  editor       = {Paola Bonizzoni and Vasco Brattka and Benedikt L{\"{o}}we},
  title        = {Analytic Root Clustering: {A} Complete Algorithm Using Soft Zero Tests},
  booktitle    = {Proc. 9th Conf. Computability in Europe},
  series       = {LNCS},
  pages        = {434--444},
  publisher    = {Springer},
  year         = {2013},
  doi          = {10.1007/978-3-642-39053-1\_51},
}

@inproceedings{core2,
  author       = {Jihun Yu and
                  Chee Yap and
                  Zilin Du and
                  Sylvain Pion and
                  Herv{\'{e}} Br{\"{o}}nnimann},
  editor       = {Komei Fukuda and
                  Joris van der Hoeven and
                  Michael Joswig and
                  Nobuki Takayama},
  title        = {The Design of Core 2: {A} Library for Exact Numeric Computation in Geometry and Algebra},
  booktitle    = {Proc. 3rd {ICMS} International Congress on Mathematical Software},
  series       = {LNCS},
  pages        = {121--141},
  publisher    = {Springer},
  year         = {2010},
  doi          = {10.1007/978-3-642-15582-6\_24},
}

@article{realLib,
   author = {Lambov, Branimir},
   title = {{RealLib}: {A}n efficient implementation of exact real arithmetic},
   journal = {MSCS},
   volume = {17},
   year = {2007},
   pages = {81--98},
}

@InProceedings{TaylorModels,
  author =	{Park, Sewon and Thies, Holger},
  title =	{{A Coq Formalization of Taylor Models and Power Series for Solving Ordinary Differential Equations}},
  booktitle =	{15th International Conference on Interactive Theorem Proving (ITP)},
  pages =	{30:1--30:19},
  series =	{{LIPIcs}},
  ISBN =	{978-3-95977-337-9},
  ISSN =	{1868-8969},
  year =	{2024},
  volume =	{309},
  editor =	{Bertot, Yves and Kutsia, Temur and Norrish, Michael},
  publisher =	{Schloss Dagstuhl Leibniz Center for Informatics},
  doi =		{10.4230/LIPIcs.ITP.2024.30},
}

@article{CookReckhow73,
  author       = {Stephen A. Cook and
                  Robert A. Reckhow},
  title        = {Time Bounded Random Access Machines},
  journal      = {J. Comput. Syst. Sci.},
  volume       = {7},
  number       = {4},
  pages        = {354--375},
  year         = {1973},
  doi          = {10.1016/S0022-0000(73)80029-7},
}

@book{Papadimitriou,
  author = {Papadimitriou, Christos H.},
  isbn = {978-0-201-53082-7},
  publisher = {Addison-Wesley},
  title = {Computational complexity},
  year = 2005
}

@article{LambdaCost,
author = {Lawall, Julia L. and Mairson, Harry G.},
title = {Optimality and inefficiency: what isn't a cost model of the lambda calculus?},
year = {1996},
issue_date = {June 15, 1996},
publisher = {Association for Computing Machinery},
address = {New York, NY, USA},
volume = {31},
number = {6},
issn = {0362-1340},
doi = {10.1145/232629.232639},
journal = {SIGPLAN Not.},
month = jun,
pages = {92–101},
numpages = {10}
}

@article{LambdaMachine,
title = {The weak lambda calculus as a reasonable machine},
journal = {Theoretical Computer Science},
volume = {398},
number = {1},
pages = {32-50},
year = {2008},
issn = {0304-3975},
doi = {https://doi.org/10.1016/j.tcs.2008.01.044},
author = {Ugo {Dal Lago} and Simone Martini},
}

@inproceedings{MullerZ14,
  author       = {Norbert Th. M{\"{u}}ller and
                  Martin Ziegler},
  editor       = {Hoon Hong and
                  Chee Yap},
  title        = {From Calculus to Algorithms without Errors},
  booktitle    = {Mathematical Software - {ICMS} 2014 - 4th International Congress,
                  Seoul, South Korea, August 5-9, 2014. Proceedings},
  series       = {Lecture Notes in Computer Science},
  pages        = {718--724},
  publisher    = {Springer},
  year         = {2014},
  doi          = {10.1007/978-3-662-44199-2\_107},
}

@article{AllenderBKM09,
  author       = {Eric Allender and Peter B{\"{u}}rgisser and Johan Kjeldgaard{-}Pedersen and Peter Bro Miltersen},
  title        = {On the Complexity of Numerical Analysis},
  journal      = {{SIAM} J. Comput.},
  volume       = {38},
  number       = {5},
  pages        = {1987--2006},
  year         = {2009},
  doi          = {10.1137/070697926},
}

@book{SICP,
author = {Abelson, Harold and Sussman, Gerald J. and Sussman, Julie},
publisher = {MIT Press},
title = {{Structure and Interpretation of Computer Programs}},
year = {2010},
edition = {Second},
isbn = {0262011530},
}

@misc{Aern,
  author       = {Michal Kone{\v{c}}n{\'{y}} and Sewon Park and Holger Thies},
  title        = {{cAERN} library},
  publisher    = {{DROPS} Artifacts},
  year         = {2024},
  month        = Nov,
  doi          = {10.4230/ARTIFACTS.22444},
  note         = {doi:10.4230/ARTIFACTS.22444},
}

@article{VANDERHOEVEN200652,
title = {Computations with effective real numbers},
journal = {Theoretical Computer Science},
volume = {351},
number = {1},
pages = {52-60},
year = {2006},
note = {Real Numbers and Computers},
issn = {0304-3975},
doi = {10.1016/j.tcs.2005.09.060},
author = {Joris {van der Hoeven}},
}

@InCollection{Neumaier1993,
author="Neumaier, A.",
editor="Albrecht, R.
and Alefeld, G.
and Stetter, H. J.",
title="The Wrapping Effect, Ellipsoid Arithmetic, Stability and Confidence Regions",
bookTitle="Validation Numerics: Theory and Applications",
year="1993",
publisher="Springer Vienna",
address="Vienna",
pages="175--190",
isbn="978-3-7091-6918-6",
doi="10.1007/978-3-7091-6918-6_14"
}

@book{DenotationalSemantics,
author = {Schmidt, David A.},
year = {1986},
title = {Denotational semantics: a methodology for language development},
publisher = {Allyn \& Bacon}
}

@inproceedings{Emperor,
   author = {Brattka, Vasco},
   title = {The Emperor's New Recursiveness: The Epigraph of the Exponential Function in Two Models of Computability},
   editor = {Ito, Masami and Imaoka, Teruo},
   booktitle = {Words, Languages \& Combinatorics III},
   publisher = {World Scientific Publishing},
   address = {Singapore},
   year = {2003},
   pages = {63--72},
}

@inproceedings{BlancBournez22,
  author       = {Manon Blanc and Olivier Bournez},
  editor       = {J{\'{e}}r{\^{o}}me Durand{-}Lose and Gy{\"{o}}rgy Vaszil},
  title        = {A Characterization of Polynomial Time Computable Functions from the
                  Integers to the Reals Using Discrete Ordinary Differential Equations},
  booktitle    = {Proc. 9th International Conference on Machines, Computations, and Universality {MCU}},
  series       = {LNCS},
  volume       = {13419},
  pages        = {58--74},
  publisher    = {Springer},
  year         = {2022},
  doi          = {10.1007/978-3-031-13502-6\_4},
}

@book{BoolosBurgess,
  author    = {Boolos, George S. and Burgess, John P. and Jeffrey, Richard C.},
  title     = {Computability and Logic},
  publisher = {Cambridge University Press},
  year      = {1974},
  address   = {Cambridge},
  isbn      = {052120402X}
}

@InProceedings{MCU2026,
 author = {Jihoon Hyun and Holger Thies and Martin Ziegler},
 title = {Towards Algorithmic Cost in Exact Real Computation},
 booktitle = {Proc. 11th Conference on Machines, Computations and Universality ({MCU})},
 publisher = "Springer",
 year = "2026",
 editor = "Henning Fernau and Serghei Verlan",
}

\appendix
\section{\ERC Example Program and Analysis of \emph{Steep} Function $1/\ln(e/x)$}
\label{a:Steep}

Recall the function $h$ from Remark~\ref{r:Modulus}d),
with continuous extension to $x=0$.
The following \ERC program implements said $h$:
    \begin{algorithmic}[0] 
    \Function{Steep}{$x:[0;1];\; p:\IN$}
    \State \textbf{var} $r:\IR=0$;\; $e:\IR=\exp(1)$;\; $x0:\IR=1/e$;
    \Comment{Calculate $x0:=\exp(-2^p)$}
    \State \textbf{var} $j:\IZ$;
    \For{$j=1\ldots p$}
       \State{$x0=x0\cdot x0$;}
    \EndFor
    \Comment{Avoid division by $x=0$}
    \If{$\mychoose(x<e\cdot x0\:,\:x>x0)==1$}
	$r=1/\ln(e/x)$; \EndIf
    \State \Return $r$;
    \EndFunction
    \end{algorithmic}
Note the separate types for reals and integers,
the use of $\mychoose()$ with `overlapping' conditions,
as well as the \emph{exact} calls to the exponential function from \cite[\S5.2]{PBC+24b}
and to the natural logarithm function below,
although their return statements need and do provide merely approximations 
up to error $2^{-p}$ due to the implicit limit; recall Subsection~\ref{ss:ERC}g).
To compute $\big(h\circ h\big)(x)$ in \ERC now, simply call $\textsc{Steep}\big(\textsc{Steep}(x)\big)$.
    \begin{algorithmic}[0] 
    \Function{ln}{$x:\IR;\; p:\IN$}  \Comment $x>0$
    \State \textbf{var} $j:\IZ=1$; \; \textbf{var} $jr:\IR=1$;
    \State \textbf{var} $r:\IR=0$;\; $e:\IR=\exp(1/2)$;
    \Comment Range reduction to \; $\tfrac{1}{e}\leq x\leq e$:
    \While{$\mychoose(x<3/2\:,\:x>1)==1$}
       \State $r=r+1/2$; \; $x=x/e$;
    \EndWhile
    \While{$\mychoose(x>\tfrac{1}{2}\:,\:x<1)==1$}
       \State $r=r-\tfrac{1}{2}$; \; $x=x\cdot e$;
    \EndWhile
    \Comment $|x-1|\leq \sqrt{1/2}$: Taylor series, $2p$ terms
    \State $e=x-1$; \; $x=e$; 
    \For{$j=1\ldots p+p$}
       \State $r=r+x/jr$; \; $x=-x\cdot e$; \; $jr=jr+1$;
    \Comment $j\equiv jr$, but different types
    \EndFor
    \State \Return $r$;
    \EndFunction
    \end{algorithmic}
Regarding the computational cost incurred by $\textsc{ln}(x,p)$,
the range reduction while-loops repeat $y+\calO(1)$ times where $y:=\max\{x,1/x\}$;
and each repetition has bit-cost $\calO\big((m+\log y)\cdot\polylog(m+\log y)\big)$,
where $m$ denotes the working precision; recall Figure~\ref{f:FPCost}.
Since each iteration `looses' $\calO(1)$ bits of precision, $m=\calO(p+y)$. 
The final for-loop is repeated $2p$ times,
each costing $\calO(p\cdot\polylog p)$:
in total $\calO\big((y+p)^2\cdot\polylog(y+p)\big)$.

Similarly, $\textsc{exp}(x,p)$ in \cite[\S5.2]{PBC+24b}
consists of a range reduction loop repeated $\calO(|x|)$ times
of bit-cost $\calO\big((m+\log|x|)\cdot\polylog(m+\log|x|)\big)$ each,
$m=\calO(p+|x|)$; followed by the Taylor expansion's first $p$ terms,
each of cost $\calO(p\cdot\polylog p)$:
in total $\calO\big((|x|+p)^2\cdot\polylog(|x|+p)\big)$.

\begin{remark}
\label{r:Steep}
The above algorithms for logarithm and exponential function 
are not optimal: quadratic in the output precision $p$
and even exponential in the length of the real argument $x$.
Indeed our goal here is to illustrate \ERC example programs
and their cost analyses.
Also, in spite of calling said sub-optimal \textsc{exp}
and \textsc{ln}, \textsc{Steep} above is still close to
optimal according to Remark~\ref{r:Modulus}d+e),
as analyzed next.
Subsection~\ref{aa:Steep} shows runtime measurements,
of approximating $\textsc{Steep}(x)$ 
for various choices of $x\ll1$
up to error $2^{-p}$ for $p=50$:
the minimum (not maximum) precision supported by 
the \textsf{iRRAM} library.
\end{remark}
Regarding $\textsc{Steep}(x,p)$,
$x0$ gets initialized to $1/\exp(1)$:
at cost $\calO(m^2\cdot\polylog m)$, as analyzed above,
in the working precision $m$.
And the for-loop then repeatedly squares $x0<1$
in order to obtain $x0=\exp(-2^p)$,
each iteration incurring $\calO(m\cdot\polylog m)$
and loosing $\calO(1)$ bits of precision.
For the $\mychoose()$ command to succeed, 
the working precision must be sufficient 
to reliably determine at least one of the 
two $\sign(x-x0)$ and $\sign(x-e\cdot x0)$.
In case $x>x0=\exp(-2^p)$, this requires $m>2^p/\ln2$;
leads to calculating $\ln(y)$ for $y:=e/x$, 
by calling $\textsc{ln}(y)$ as analyzed above at cost 
$\calO\big((p+y)^2\cdot\polylog(p+y)\big)
\leq\calO\big((p+1/x)^2\cdot\polylog(p+1/x)\big)
\leq2^{-\calO(p)}$.
Verifying the inequality $x<e\cdot x0=\exp(1-2^p)$ 
on the other hand takes working precision $m>(1-2^p)/\ln2$.
Either way the global cost is $2^{\calO(p)}$.

\section{Computational Experiments}
\label{a:Experiments}

The following implementations and measurements were performed with the
\textsf{iRRAM} \texttt{C++} library (version 2014\_01, backend \textsf{MPFR})
running under \textsf{Ubuntu}
on a (virtual single core of an) Intel Broadwell CPU at 2.3GHz with 4GB RAM.

\subsection{Integer Rounding Multifunction}
\label{aa:Round}

Running times
of the `unary' (top) 
and the `binary' (bottom) multivalued integer rounding algorithms
from Examples~\ref{x:URound} and \ref{x:BRound}:
first with linear fit $-45.3+0.00058|x|$,
second with logarithmic fit $-22.7+0.058\lg|x|$.

\begin{tikzpicture}
\begin{axis}
[
    xlabel={$x$}, 
    ylabel={runtime/msec},
    width=0.80\textwidth,
    height=0.4\textwidth,
    scale only axis,
    ymin=10,
    xmax=19000000,
    xmin=-19000000,
    yticklabel style={/pgf/number format/fixed},
    scaled y ticks=false,
    scaled x ticks=false
    ]
\addplot[only marks] table[col sep=comma]{unaryround.csv};
\addplot[domain=-10000000:10000000]{-45.3385 + 0.0005845*abs(x)};
\end{axis}
\end{tikzpicture}

\medskip\noindent
\begin{tikzpicture}
\begin{axis}
[
    xlabel={$\lg|x|$}, 
    ylabel={runtime/msec},
    width=0.80\textwidth,
    height=0.4\textwidth,
    scale only axis,
    ymin=0, xmin=0, xmax=10500,
    xticklabel style={/pgf/number format/fixed},
    yticklabel style={/pgf/number format/fixed},
    scaled x ticks=false
    ]
\addplot[only marks] table[col sep=comma]{binroundlog.csv};
\addplot[domain=0:9999]{-22.6771 + 0.0583*x};
\end{axis}
\end{tikzpicture}


\subsection{Iterating the Logistic Map}
\label{aa:Logistic}
Working precision $m=\mu(50)$ 
and running time (in seconds, bottom) for calculating $k$
reliable iterations of the Logistic Map (Example~\ref{x:Logistic2}):
first with linear fit $10\,000+2.05k$,
second with quadratic fit $22\text{nsec}\cdot k^2-1.2\text{msec}\cdot k$.

\noindent\begin{tikzpicture}
\begin{axis}
[
    xlabel={$k$}, 
    ylabel={},
    xtick={0, 100000, 500000, 1000000},
    width=0.80\textwidth,
    height=0.4\textwidth,
    scale only axis,
    yticklabel style={/pgf/number format/fixed},
    scaled y ticks=false,
    scaled x ticks=false
    ]
\addplot[only marks] table[col sep=comma]{logistic.csv};
\addplot[domain=250:1000000]{10000+2.05*x};
\end{axis}
\end{tikzpicture}

\medskip\noindent\quad\begin{tikzpicture}
\begin{axis}
[
    xlabel={$k$}, 
    ylabel={},
    xtick={0, 100000, 500000, 1000000},
    width=0.80\textwidth,
    height=0.4\textwidth,
    scale only axis,
    yticklabel style={/pgf/number format/fixed},
    scaled y ticks=false,
    scaled x ticks=false
    ]
\addplot[only marks] table[col sep=comma, x index=0,y index=2]{logistic.csv};
\addplot[domain=250:1000000]{2.2E-8*x^2-0.0017*x};
\end{axis}
\end{tikzpicture}

\subsection{Exponentially Steep Function $1/\ln(e/x)$}
\label{aa:Steep}

Working precision $m=\mu(50)$ 
and running time (in seconds, bottom) for approximating $1/\ln(e/x)$
according to Appendix~\ref{a:Steep}:
first with linear fit, 
second with quadratic fit.

\noindent\begin{tikzpicture}
\begin{axis}
[
    xlabel={$\ln(1/x)$}, 
    ylabel={},
    xtick={000000, 2000000, 5000000, 10000000},
    width=0.80\textwidth,
    height=0.4\textwidth,
    scale only axis,
    yticklabel style={/pgf/number format/fixed},
    scaled y ticks=false,
    scaled x ticks=false
    ]
\addplot[only marks] table[col sep=comma, x index=0,y index=2]{steep.csv};
\addplot[domain=250:10000000]{56770+2.260*x};
\end{axis}
\end{tikzpicture}

\medskip\noindent\quad\begin{tikzpicture}
\begin{axis}
[
    xlabel={$\ln(1/x)$}, 
    ylabel={},
    xtick={000000, 2000000, 5000000, 10000000},
    width=0.80\textwidth,
    height=0.4\textwidth,
    scale only axis,
    yticklabel style={/pgf/number format/fixed},
    scaled y ticks=false,
    scaled x ticks=false
    ]
\addplot[only marks] table[col sep=comma, x index=0,y index=1]{steep.csv};
\addplot[domain=250:10000000]{12+5.3E-6*x+8.7E-11*x^2};
\end{axis}
\end{tikzpicture}


\subsection{Runtime Measurements Raw Data}
\label{ss:Data}

\begin{figure}[htb]
\begin{minipage}[t]{0.4\textwidth}
\begin{tabular}{r@{\;\;}|r}%
    \bfseries $x$ ~ & \bfseries time/msec~ 
    \begin{filecontents*}{unaryround.csv}
0.0001,6.155075
-0.0001,5.182057
0.99999,4.562544
-0.99999,4.392102
2,4.188037
-2,4.181838
2.5,3.942352
-2.5,3.956978
2.0001,3.787837
-2.0001,3.901077
2.99999,3.757478
-2.99999,3.816811
1000,4.485281
-1000,4.614206
16777216,6323.040731
-16777216,6929.443467
100,2.052199
200,1.911965
500,2.001556
1000,2.211639
2000,2.659498
5000,3.62145
10000,5.590385
20000,12.472692
50000,22.021783
100000,40.797577
200000,74.125099
500000,197.428489
1000000,394.071932
2000000,863.55783
5000000,2768.039595
10000000,5922.540295
\end{filecontents*}
\csvreader[no head]{unaryround.csv}{}{\\\csvcoli & \csvcolii}
\end{tabular}
\end{minipage}\hfill\begin{minipage}[t]{0.4\textwidth}
\begin{tabular}{r@{\;}|r}%
    \small\bfseries $x$ ~ & \small\bfseries time/msec%
    \begin{filecontents*}{binround.csv}
0.0001,4.076667
-0.0001,5.016125
0.99999,4.76466
-0.99999,4.440129
2.5,4.039695
-2.5,3.850079
2.0001,3.937908
-2.0001,3.865386
2.99999,3.816177
-2.99999,3.639165
1000,3.754206
-1000,3.986979
16777216,3.713004
-16777216,3.735682
268435456,3.724717
-268435456,3.72116
1073741824,3.78413
-1073741824,3.773491
4294967296,3.729712
-4294967296,3.816748
1E+100,5.662915
1E+200,9.093034
1E+300,12.064099
1E+400,13.959395
1E+500,15.14125
1E+600,12.591542
1E+700,16.02842
1E+800,17.709067
1E+900,21.902216
1E+1000,24.110668
1E+2000,58.825866
1E+3000,112.257813
1E+4000,156.610285
1E+5000,221.570062
1E+6000,301.822754
1E+7000,322.746488
1E+8000,431.378636
1E+9000,581.370834
1E+10000,622.362537
\end{filecontents*}
\csvreader[no head]{binround.csv}{}{%
    \\[-0.5ex]\tiny\csvcoli & \tiny\csvcolii}
\end{tabular}
\end{minipage}
\caption{\label{f:RoundTables}\normalsize Running times
of the `unary' (left) 
and the `binary' (right) multivalued integer rounding algorithms
from Examples~\ref{x:URound} and \ref{x:BRound}}
\end{figure}

\normalsize 

\begin{figure}[htb]
\begin{tabular}{r@{\;\;}|r@{\;\;}r}%
    \bfseries $k$ ~ & \bfseries -work.prec & \bfseries time/sec~ 
    \begin{filecontents*}{logistic.csv}
100,	242,	0.00340499
200,	375,	0.00216399
500,	1008,	0.00471899
1000,	2243,	0.00704499
2000,	4657,	0.0212239
5000,	14807,	0.0962719
10000,	57301,	1.4745333
20000,	57301,	2.20068
50000,	112194,	20.4299
100000,	219405,	113.383
200000,	428801,	580.73
500000,	1047294,	4683.64
1000000,	2045771,	20491
\end{filecontents*}
\csvreader[no head]{logistic.csv}{}{\\\csvcoli & \csvcolii & \csvcoliii }
\end{tabular}
\caption{\label{f:LogisticTable}\normalsize Worst-case
working precision and running time when calculating $k$ 
reliable iterations of the Logistic Map (Example~\ref{x:Logistic2})}
\end{figure}

\end{document}